\numberwithin{equation}{section}
\def\R{\mathbb R}
\def\Z{\mathbb Z}
\def\C{\mathbb C}
\def\N{\mathbb N}
\def\E{\mathbb E}
\def\p {\mathbbm{1}_{\wp}} 
\def\CA{\mathcal{A}}
\def\ee{\varepsilon}
\def\leq {\leqslant}
\newtheorem{theorem}{Theorem}[section]
\newtheorem{lemma}[theorem]{Lemma}
\newtheorem{proposition}[theorem]{Proposition}
\theoremstyle{remark}
\newtheorem{remark}{Remark}
\newtheorem*{xremark}{Remark}
\theoremstyle{definition}
\numberwithin{equation}{section}
\theoremstyle{remark}
\def\E{\mathbb E}
\title[Helson's conjecture for smooth numbers]{Helson's conjecture for smooth numbers}
\author{Seth Hardy}
\address{Mathematics Institute, Zeeman Building, University of Warwick, Coventry CV4
7AL, England}
\email{seth.hardy@warwick.ac.uk}
\author{Max Wenqiang Xu} 
\address{Courant Institute of Mathematical Sciences, 251 Mercer Street, New York 10012, USA}
\email{maxxu1729@gmail.com}
\date{\today}
\begin{document}

\begin{abstract}
Let $\Psi(x,y)$ denote the count of $y$-smooth numbers below $x$ and $P(n)$ denote the largest prime factor of $n$. We prove that for $f$ a Steinhaus random multiplicative function, the partial sums over $y$-smooth numbers always enjoy better than squareroot cancellation, in the sense that
$$ \E \Big|\sum_{\substack{1\leq n \leq x\\ P(n) \leq y}} f(n) \Big| = o\left( \Psi(x,y)^{1/2} \right),$$
uniformly on the entire range $ 2 \leq y \leq x$. The bounds are quantitative and give a large saving when $y$ isn't too close to $x$.
\end{abstract} 

\maketitle
\section{Introduction}

\subsection{Motivation and statement of results}
Multiplicative functions are a fundamental object of study in number theory, and, in recent years, research on \emph{random} multiplicative functions has been pursued as an avenue to better understand the behaviour of certain families of multiplicative functions. The study of random multiplicative functions was initiated by Wintner~\cite{Win} in 1944, where he introduced the \emph{Rademacher} random multiplicative function as a model for studying the partial sums of the Möbius function. In this paper, we concern ourselves only with the \emph{Steinhaus} random multiplicative function, $f \colon \N \rightarrow \C$, which is defined by letting $\bigl( f(p) \bigr)_{p \text{ prime}}$ be independent and identically distributed random variables uniformly distributed on the complex unit circle $\{ z \in \C : |z| = 1 \}$, and extending to composite numbers $n = p_1^{\alpha_1} \dots p_r^{\alpha_r}$ by setting 
\[
f(n) = \prod_{i=1}^r f(p_i)^{\alpha_i}.
\]
The Steinhaus random multiplicative function serves as a model for families of Dirichlet characters, $n \rightarrow \chi(n)$ for $\chi \bmod r$ chosen uniformly at random, in addition to continuous characters, $n \rightarrow n^{it}$, for $t \in [T,2T]$ chosen uniformly at random, where $r$ and $T$ are to be thought of as parameters tending to infinity. For $f$ a Steinhaus random multiplicative function, we have ``perfect orthogonality'', in the sense that
\[
\E \Bigl[ f(n) \overline{f(m)} \Bigr] = \mathbf{1}_{n=m} .
\]
This relation is mirrored by corresponding orthogonality relations for characters. It is of great interest to analytic number theorists to understand the statistical behaviour of character sums, $\sum_{n \leq x} \chi (n)$, and zeta sums, $\sum_{n \leq x} n^{it}$. In light of the orthogonality relations, the second moments of these quantities are fairly straightforward to evaluate, most of all in the Steinhaus case where we immediately see that $ \E \bigl| \sum_{n \leq x} f (n) \bigr|^2 = \lfloor x \rfloor$. A natural problem that follows is to understand the first moment, $\E \bigl| \sum_{n \leq x} f(n) \bigr|$, in addition to the character and zeta sum analogues, though understanding these quantities is a much harder problem. A naive first guess is that, ignoring the multiplicative dependence structure, the sum $\sum_{n \leq x} f(n)$ may behave like the first absolute moment of a sum of mean-zero independent random variables with unit variance, and, if this were true, the Cauchy--Schwarz upper bound $\E \bigl| \sum_{n \leq x} f(n) \bigr| \leq \sqrt{x}$ would be sharp up to a constant. This upper bound is often referred to as squareroot cancellation, and more generally gives $\E \bigl| \sum_{n \in \CA} f(n) \bigr| \leq \sqrt{|\CA|}$ for an arbitrary set $\CA$. Helson \cite{Helson} conjectured that, for the full sum up to $x$, we should actually have more than squareroot cancellation, in the sense that $\E \bigl| \sum_{n \leq x} f(n) \bigr| = o(\sqrt{x})$. The answer to this conjecture remained unclear until the problem was fully resolved by Harper \cite{HarperLow}, who showed that Helson's conjecture was true, proving the precise estimate
\[
\E \Bigl|\sum_{1\leq n \leq x} f(n) \Bigr| \asymp \frac{\sqrt{x}}{(\log \log x)^{1/4}} .  
\]
The analogous character and zeta upper bounds are also proved by Harper in \cite{harpertypical} using a remarkable comparison method. More precisely, Harper explicitly relates these moments of character sums and zeta sums to (a certain stage in the proof of) the random multiplicative case. As is to be expected, the length of the sum in that case depends on the parameters $r$ and $T$ that determine the size of the family of characters. For example, it is shown that for any large prime $r$ and for all $x\leq r$ with $x, \frac{r}{x} \to +\infty$, one has
\[
\frac{1}{r-1} \sum_{\chi \bmod r} \Big|\sum_{1\leq n \leq x}\chi(n) \Big| = o(\sqrt{x}). 
\]
In that work, Harper also conjectured (see~\cite[Equation~(1.2)]{harpertypical}) that this phenomenon can be pushed further, in that the \textit{``conductor restriction''} $x\leq r$ can be weakened if one has a M\"obius twist, an application of which would be to break the classical ``squareroot barrier'' in M\"obius cancellation (see \cite{WXRatios} for further discussion). We shall discuss later in the introduction (after Theorem~\ref{Thm: main}) about how an analogous conjecture in our setting may have consequences for counting smooth numbers in small intervals.

The key idea behind Harper's theorem is to notice that the partial sums are connected to a phenomenon known as \textit{critical Gaussian multiplicative chaos} (GMC). To briefly elaborate, we define $F_x (s) \coloneqq \prod_{p\leq x} \bigl( 1 - \frac{f(p)}{p^{s}} \bigr)^{-1}$ to be the truncated Euler product. By a non-trivial conditioning argument and an application of Parseval's identity, the proof works by first showing that
\begin{equation}\label{equ:full sum relation to EP}
\E \Bigl| \sum_{n \leq x} f(n) \Bigr| \approx \sqrt{\frac{x}{\log x}} \E \left( \int_{-1/2}^{1/2} |F_x (1/2 + it)|^{2} dt\right)^{1/2} ,
\end{equation}
and then by showing that this expectation of the Euler product integral is smaller than naively anticipated. The driving force behind this is the fact that $\bigl( \log|F(1/2 + it)| \bigr)_{t \in [-1/2,1/2]}$ is well approximated by a Gaussian field with logarithmic correlations, and the exponent 2 is exactly the critical value in this case, all together meaning that these integrals are precisely the setting where one observes critical Gaussian multiplicative chaos. The reader is encouraged to consult~\cite{Harperhigh, HarperIII} for more detailed discussions. We also remark that there have been recent progress in understanding the exact limiting distribution of the partial sums in~\cite{GW-Final, Hardy2025}.

It is natural to ask whether one still obtains ``better than squareroot cancellation'' when the full sum is replaced by a sum over a set $\CA$ that has interesting arithmetic structure. In previous work of the second author~\cite{Xu}, the case where $\CA = \CA (x, R)$ is a set of $R$-rough numbers up to $x$, i.e., all elements in the set have prime factors with size at least $R$, was studied. In that case, there is an interesting transition range, since we observe ``better than squareroot'' cancellation,
\[
\E \Bigl| \sum_{n \in \CA (x, R)} f(n) \Bigr| = o \left(\sqrt{|\CA (x, R)|} \right) ,
\]
whenever $\log \log R$ is smaller than roughly $\sqrt{\log \log x}$, but the statement fails to hold when $R$ is any larger. The case where $\mathcal{A}$ is a short interval was studied in~\cite{Caichshort}, and, together with the related work~\cite{Chang}, these three papers observe some universality in the transition threshold which is ultimately related to the ballot problem.

In this paper, our attention lies on what happens when $\CA$ is the set of $y$-smooth numbers, i.e., all elements in $\CA$ only have prime factors $p\leq y$. Here and throughout the paper, let $P(n)$ denote the largest prime factor of $n$, so that $\Psi(x,y) = \# \{ n \leq x : P(n) \leq y \}$ denotes the count of $y$-smooth numbers up to $x$. Note that the case where $y=x$ corresponds to the full sum, for which we know from Harper~\cite{HarperLow} that we have better than squareroot cancellation. Analogously to the previous cases, one might wonder if there is a certain transition range where the ``better than squareroot'' cancellation disappears. 
Our main theorem shows that this is not the case:

\begin{theorem}\label{Thm: main}
Uniformly for $2 \leq y \leq x$, we have
\[
\E \Big|\sum_{\substack{1\leq n \leq x\\ P(n) \leq y}} f(n) \Big| = o\left( \Psi(x,y)^{1/2} \right) .
\]
Quantitative bounds can be found in Theorem~\ref{Thm: main quantitative} (for moderately sized $y$), Theorem~\ref{t:small u upper bound} (for $y$ close to $x$), and Theorem~\ref{t: small y} (for small $y$).
\end{theorem}

One motivation and a potential application for our result is to make further progress on the classical problem of counting $y$-smooth numbers in short intervals $[x, x+h]$. See recent developments and discussions in \cite{soundshortinterval, KYounis}. The breakthrough work of Matomäki and Radziwi\l{}\l{} \cite{MR16} shows the existence of $x^{\ee}$-smooth numbers unconditionally in intervals of length $h\gg_{\ee} \sqrt{x}$, coming close to solving an old problem that, for any fixed $\ee > 0$, there exists an $x^{\ee}$-smooth number in $[x, x+ \sqrt{x}]$ when $x$ is sufficiently large. To see how our result may be useful, similarly to~\cite[Equation (1.2)]{harpertypical} (though removing the ``deterministic'' contribution from $t \approx 0$), we might conjecture that for any fixed $A > 0$ and small $\ee>0$, we have, very roughly speaking,
\begin{equation}\label{equ:deterministic small conductor conjecture}
\frac{1}{T} \int_{\substack{|t| \leq T \\ t \not \approx 0}} \Bigl| \sum_{\substack{n \leq x \\ P(n) \leq x^{\ee}}} n^{-it} \Bigr| \, dt \ll \E \Bigl| \sum_{\substack{n \leq x \\ P(n) \leq x^{\ee}}} f(n) \Bigr|, \quad \forall x \leq T^A .
\end{equation}
Importantly, we know from Theorem~\ref{Thm: main} that the right-hand side is $o \bigl(\sqrt{\Psi(x,x^{\ee})} \bigr)$. Similar to as described in~\cite{harpertypical}, though isolating the main term in this case, Perron's formula suggests that
\[
\frac{\Psi (x + h, x^{\ee}) - \Psi (x,x^{\ee})}{h} \approx \frac{1}{2\pi x} \int_{-x/h}^{x/h} \sum_{\substack{n \leq x + h \\ P(n) \leq x^{\ee}}} n^{-it} \biggl( \frac{x}{h} \biggr)^{it} \, dt \approx \frac{\Psi(x,x^{\ee})}{x} + E(x,x^{\ee},h),
\]
where
\[
E (x,x^{\ee},h) \ll \frac{1}{x} \int_{\substack{|t| \leq x/h \\ t \not \approx 0}} \Bigl| \sum_{\substack{n \leq x + h \\ P(n) \leq x^{\ee}}} n^{-it} \Bigr| \, dt .
\]
In particular, since $\Psi(x,x^\varepsilon) \asymp_\varepsilon x$, combining Theorem~\ref{Thm: main} and~\eqref{equ:deterministic small conductor conjecture} gives the bound $E (x,x^{\varepsilon},\sqrt{x}) = o\big(  \sqrt{\Psi(x, x^{\ee})/x} \big) = o (1)$, which (very roughly) gives an asymptotic for the count in intervals of length $h = \sqrt{x}$. In fact, our quantitative result for $y$ in this range, Theorem~\ref{t:small u upper bound}, would suggest further that this strategy could show the existence of $x^{\ee}$-smooth numbers in intervals of length $h \gg \sqrt{x} / (\log \log x)^{1/4}$, similarly to how the analogous conjecture in~\cite{harpertypical} gives M\"{o}bius cancellation in sums of length $h \gg \sqrt{x} / (\log \log x)^{1/4}$.

\bigskip 
In this paper, we observe three distinct sources for the ``better than squareroot cancellation'' phenomena in Theorem~\ref{Thm: main}; the size of $y$ being key to this analysis. First of all, for $y$ close to $x$, one should expect that the critical GMC phenomenon uncovered by Harper~\cite{HarperLow} should give better than squareroot cancellation. We find this to be the case, and moreover, we are able to use critical GMC phenomenon to prove better than squareroot cancellation even when $y$ is as small as $e^{(\log \log x)^{5/3+\varepsilon}}$. We outline how this is done in Section~\ref{s: large y explained}, and a precise statement is given in Theorem~\ref{t:small u upper bound}. The critical GMC saving in our case specifically comes from exploiting the randomness of $f(p)$ on the small primes (to be precise, primes smaller than roughly $e^{\frac{1}{1 - \alpha}}$ for $\alpha = \alpha(x,y)$ the saddle point, see Section~\ref{s:smooths intro} for details), and, as one may expect, the saving is always quite small (like a power of $\log \log x$, or smaller).

Given the previous works~\cite{Caichshort, HarperLow, Xu}, one may not expect to observe cancellation that is larger than some power of $\log \log x$ for any reasonably large $y$. \emph{However, we find that when $y$ is smaller than roughly $x^{\frac{1}{\log \log x}}$, we can obtain an unexpectedly large saving (coming from the primes closer to $y$).} In short, this saving comes from the fact that the dominant contribution to our Euler product expectations comes from highly unlikely events (we explain this in detail in Section~\ref{s:outline of moderate y}). The flavour of this might remind some readers of the \textit{supercritical} GMC case, although this is not exactly what we observe, since our Euler products are shifted off the half-line and are no longer approximated by the exponential of a log-correlated Gaussian field. This allows us to prove the following:

\begin{theorem}[Moderate $y$]\label{Thm: main quantitative}
Let $u = \frac{\log x}{\log y}$ and fix $\varepsilon > 0$. For $(\log x)^{1 + \varepsilon} \leq y \leq x^{\frac{1}{(\log \log x)^{1 + \varepsilon}}}$, we have
\begin{equation}\label{equ: alt statement}
\E \Bigl| \sum_{\substack{n \leq x \\ P(n) \leq y}} f(n) \Bigr| \ll \sqrt{\Psi(x,y) \exp \bigl( -u (\log 2 + o(1) \bigr)},
\end{equation}
where the $o(1)$ term goes to zero uniformly for $y$ in this range. This result is derived from the following bound: In the larger range $1000 \log x \leq y \leq x^{\frac{1}{\log \log x}} $, we have
\[
\E \Bigl| \sum_{\substack{n \leq x \\ P(n) \leq y}} f(n) \Bigr| \ll \Psi(x^2, y)^{1/4} \exp \biggl( O \biggl( u \Bigl( \frac{\log x}{y} \Bigr)^{1/3} \biggr) \biggr) (\log x)^{9/8} (\log y)^{1/8},
\]
where all implied constants are uniform for $y$ in this range. As is shown in the proof (see~\eqref{equ: comp to trivial bound}), this
gives better than squareroot cancellation (saving a factor of at least $e^{cu}$ over the trivial bound $\sqrt{\Psi(x,y)}$, for some fixed $c>0$) uniformly on the entire range $C \log x \leq y \leq x^{\frac{1}{3 \log \log x}} $ for some large constant $C \geq 1000$.
\end{theorem}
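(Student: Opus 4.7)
The plan is to adapt Harper's approach from~\eqref{equ:full sum relation to EP} to smooth sums, working at the saddle point $\alpha = \alpha(x, y)$ of $\Psi(x, y)$ rather than on the critical line. Writing $F_y(s) = \prod_{p \leq y}(1 - f(p)/p^s)^{-1}$ for the truncated random Euler product, the first step is to use Perron's formula on $\Re s = \alpha$ together with Cauchy--Schwarz to reduce the problem to bounding
$$\E \Bigl(\int_{|t| \leq T} |F_y(\alpha + it)|^2 \, dt\Bigr)^{1/2},$$
so that schematically $\E |S| \ll x^{\alpha} \sqrt{T}/\alpha$ times this expected integral. The naive bound, passing $\sqrt{\cdot}$ outside via Jensen and using $\E |F_y(\alpha + it)|^2 = \prod_{p \leq y}(1 - p^{-2\alpha})^{-1}$, recovers $\sqrt{\Psi(x, y)}$ via the Hildebrand--Tenenbaum saddle-point asymptotic; all subsequent work is about beating this Jensen step.

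To beat it, I would split $F_y = F_{\leq z} \cdot F_{(z, y]}$ at a threshold $z$ close to $y$ (as suggested in the introduction for the moderate-$y$ regime) and condition on the $\sigma$-algebra generated by $\{f(p) : p \leq z\}$. Independence and the identity for the conditional second moment of $|F_{(z,y]}(\alpha+it)|^2$ give
$$\E \Bigl(\int |F_y|^2 dt\Bigr)^{1/2} \leq \sqrt{\prod_{z < p \leq y}(1 - p^{-2\alpha})^{-1}} \cdot \E \Bigl(\int |F_{\leq z}|^2 dt\Bigr)^{1/2}.$$
The key is then to show that $\E(\int |F_{\leq z}|^2 dt)^{1/2}$ is strictly smaller than $\sqrt{\E \int |F_{\leq z}|^2 dt}$ by a substantial factor, using the ``rare event'' phenomenon flagged in the introduction: the dominant contribution to $\E |F_{\leq z}(\alpha + it)|^2$ comes from atypical configurations of the low primes, so restricting to the typical event (say $|F_{\leq z}(\alpha + it)| \leq M$ for a well-chosen $M$) cuts the bulk of the second moment down substantially, while a Chebyshev/union bound handles the rare-event contribution. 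Combining these estimates and unwinding via saddle-point asymptotics should then yield the claimed $\Psi(x^2, y)^{1/4}$ headline factor.

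The hardest step is making the rare-event saving quantitative off the critical line. For $y = x$ this is handled by critical GMC, but in the moderate-$y$ regime the saddle point $\alpha$ is bounded away from $1/2$, and $\log|F_{\leq z}(\alpha+it)|$ is heavy-tailed rather than approximately log-correlated Gaussian, so the analysis must use the distributional tail directly. I expect the $O(u^{8/11})$ error in the exponent to arise from a min--max optimisation over three parameters --- the split threshold $z$, the integration cutoff $T$, and the rare-event threshold $M$. A final ingredient is careful bookkeeping through the saddle-point formulas for both $\Psi(x, y)$ and $\Psi(x^2, y)$, in order to convert the Euler product $\prod_{z < p \leq y}(1 - p^{-2\alpha})^{-1}$ into the claimed $\Psi(x^2, y)^{1/2}$ up to the stated subleading factors.
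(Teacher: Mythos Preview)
The reduction step is where your proposal breaks down. Perron on the line $\Re s = \alpha$ followed by Cauchy--Schwarz on the $t$-integral gives a pointwise bound of the shape $|S| \ll x^{\alpha} \bigl(\int |F_y(\alpha+it)|^2\,dt\bigr)^{1/2}$ up to harmless factors, but then the Jensen baseline $\E\bigl(\int|F_y(\alpha+it)|^2\,dt\bigr)^{1/2}\le \bigl(2T\,\zeta(2\alpha,y)\bigr)^{1/2}$ does \emph{not} recover $\sqrt{\Psi(x,y)}$: you carry $x^\alpha$ outside, whereas $\sqrt{\Psi(x,y)} \asymp x^{\alpha/2}\zeta(\alpha,y)^{1/2}$, and no saving on the Euler-product side can absorb the extra $x^{\alpha/2}$. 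The paper works on the line $\Re s = \alpha/2$ (note $\E|F_y(\alpha/2+it)|^2\asymp\zeta(\alpha,y)$, not $\zeta(2\alpha,y)$), and it does not reach that line via Perron at all. Instead one writes $n = m\ell$ with $m$ supported on primes in $(y/2,y]$, applies Cauchy--Schwarz to the conditional expectation over those large primes \emph{at the partial-sum level}, smooths the resulting sum over $m$ into an integral, and only then applies Plancherel (Lemma~\ref{l:ha result}). This single Cauchy--Schwarz, done before any Fourier step, is what produces the prefactor $x^{\alpha(x^2,y)/2}$ and the quantity $\E\bigl(\int |F_{y/2}(\alpha(x^2,y)/2+it)|^2/|\alpha(x^2,y)/2+it|^2\,dt\bigr)^{1/2}$, whose Jensen baseline \emph{does} match $\sqrt{\Psi(x,y)}$.

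For the saving itself, your rare-event threshold idea points in the right direction, but the paper's mechanism is both simpler and sharper: discretise $[-1/2,1/2]$ into $\asymp\log y$ pieces, apply sub-additivity of $x\mapsto x^{1/2}$ to the sum of local integrals, and use the moment identity $\E|F_y(\beta/2+it)|^{2q}\asymp\zeta(\beta,y)^{q^2}$ (Lemma~\ref{l:standardEPresult}). This gives $\E\bigl(\int|F_y(\beta/2+it)|^2\,dt\bigr)^{1/2}\ll(\log y)^{1/2}\zeta(\beta,y)^{1/4}\exp\bigl(O((\log\zeta(\beta,y))^{8/11})\bigr)$ directly; the $8/11$ comes from a single H\"older-exponent choice in Theorem~\ref{t:new ep bound}, not from a three-parameter optimisation over $(z,T,M)$. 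Also, $\log|F_y(\beta/2+it)|$ \emph{is} approximately Gaussian for each fixed $t$ (a sum of bounded independent summands); what changes off the half-line is only that its variance is now $\asymp u$ rather than $\asymp\log\log x$, and the correlation structure in $t$ --- not the tail shape. Finally, your sketch omits a genuine obstacle in the range $y\le e^{(\log x)^{1/2+o(1)}}$: there the smoothing window $1/X$ needed to make the error negligible forces $X\gg e^{cu}\gg y$, so the short sums over $m$ can be empty. The paper handles this with a multi-scale decomposition of $n$ by prime ranges $(y/M^k,y/M^{k-1}]$ (Section~\ref{s:proof of middle small y}), which is not a cosmetic detail and would need an analogue in your approach.
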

This theorem is proved in Section~\ref{sec: moderate}, and we stress that the proof \emph{does not} make use of critical GMC phenomena, where the saving would typically be much smaller. This is our main result, and we outline the proof in Section~\ref{s:outline of moderate y}. 

Finally, when $y$ is small (below roughly $\log x$), we again have a natural transition range in the behaviour of smooth numbers, and the methods used to prove the above theorem fail. Despite this, a new analysis of random Euler products close to the imaginary axis allows us to obtain lots of cancellation (see Theorem~\ref{t: small y}). 

Before outlining the proof of Theorem~\ref{Thm: main quantitative} and stating our results for other ranges of $y$, we first recall some fundamental results on smooth numbers.

\subsection{Smooth numbers and the transition range}\label{s:smooths intro}
For the uninitiated reader, we give a short review of smooth numbers, restricting ourselves to the case where $(\log x)^3 \leq y \leq x$, say. We refer readers to \cite{Gran08} for more background. 

We can quite accurately upper bound the number of $y$-smooth numbers below $x$ simply by a simple application of Rankin's trick. Define the truncated Euler product $\zeta(s,y) \coloneqq \prod_{p \leq y} \bigl( 1 - \frac{1}{p^s} \bigr)^{-1}$ for any $\Re (s) > 0$. For any $\sigma > 0$, we have
\begin{equation}\label{equ:rankins trick}
\Psi(x,y) = \sum_{\substack{n \leq x \\ P(n) \leq y}} 1 \leq \sum_{P(n) \leq y} \biggl( \frac{x}{n} \biggr)^{\sigma} = x^\sigma \prod_{p \leq y} \biggl( 1 - \frac{1}{p^{\sigma}} \biggr)^{-1} = x^{\sigma} \zeta (\sigma, y) .
\end{equation}
We now choose $\sigma > 0$ so that it minimises the right-hand side. It turns out that this minimiser is unique, and is called the saddle point, $\alpha = \alpha (x,y)$. Since $\sigma = \alpha(x,y)$ satisfies $\frac{d}{d \sigma} \log (x^{\sigma} \zeta(\sigma, y)) \mid_{\sigma = \alpha (x,y)} = 0$, it follows that
\[
\sum_{p \leq y} \frac{\log p}{p^{\alpha (x,y)}-1} = \log x .
\]
and it can be shown~\cite[Lemmas~1 and~2]{HildTen86} that
\begin{equation}\label{equ: Saddle point approx in intro}
\alpha (x,y) = 1 - \frac{\log (u \log (u + 1))}{\log y} + O \biggl( \frac{1}{\log y} \biggr),
\end{equation}
where $u = \frac{\log x}{\log y}$. The error term can be improved to $o\bigl( \frac{1}{\log y} \bigr)$ as soon as $(\log x)^2 \leq y \leq x^{o(1)}$, and we state a precisely result in Lemma~\ref{l:saddle point approx}. The bound obtained by Rankin's trick, $\Psi(x,y) \leq x^{\alpha} \zeta(\alpha,y)$, is close to optimal, seeing as for all $2 \leq y \leq x$, we have
\begin{equation}\label{equ:rankin to true count comparison}
x^{\alpha} \zeta(\alpha,y) \ll \Psi(x,y) \sqrt{\log x \log y},
\end{equation}
which follows from~\cite[Theorems~1 and~2]{HildTen86}. In our analysis, in contrast to~\eqref{equ:full sum relation to EP}, we are interested in integrals involving the partial Euler products $F_y (\alpha / 2 + it)$. In this case, the variance of $\log |F_y (\alpha/2 + it)|$ is roughly $\frac{1}{2} \sum_{p \leq y} \frac{1}{p^{\alpha}}$, and 
letting $z = e^{\frac{1}{1 - \alpha}}$, this can be written as
\begin{equation}\label{equ: variance evaluation}
\begin{split}
\frac{1}{2} \sum_{p \leq y} \frac{1}{p^{\alpha}}
& = \frac{1}{2} \biggl( \sum_{p \leq z} \frac{1}{p^{1 - \frac{1}{\log z}}} + \sum_{z < p \leq y} \frac{1}{p^{\alpha}} \biggr) \\
& \sim \frac{1}{2} \biggl( \log \Bigl( \frac{1}{1 - \alpha} \Bigr) + \frac{y^{1 - \alpha}}{(1 - \alpha) \log y} \biggr),
\end{split}
\end{equation}
whenever $2/3 \leq \alpha \leq 1 - \frac{1}{\log y}$, say. Calculations of this type can be found in~\cite[Lemma~7.4]{MV2007}. Using the saddle point approximation~\eqref{equ: Saddle point approx in intro}, the first term in the parentheses is roughly $\log \log y$, and the second term is of size $u$. Therefore, the dominant contribution to this variance comes from the primes $p \leq z$ whenever $u$ is smaller than roughly $\log \log x$, whereas the dominant contribution comes from $p > z$ whenever $u$ is larger. This is why we see a phase transition at roughly $y \approx x^{\frac{1}{\log \log x}}$, though we will elaborate more on this in the next section.

\subsection{Outline of the proof of Theorem~\ref{Thm: main quantitative}}\label{s:outline of moderate y} 
In this section, we will explain the behaviour for ``moderately sized'' $y$ (though this actually encompasses a large range), say $C \log x \leq y \leq x^{\frac{1}{\log \log x}}$ for some large constant $C>0$. The typical way that one approaches these expectation problems is to use conditioning to relate the first absolute moment of our partial sums to the $1/2$'th moment of the mean square of a random Euler product, as in~\eqref{equ:full sum relation to EP}. In our case, this turns out to be unnecessary, and we can begin by applying Perron's formula to obtain a bound of the form
\begin{equation}\label{equ: Perron application}
\E \Bigl| \sum_{\substack{n \leq x \\ P(n) \leq y}} f(n) \Bigr| \ll \E \biggl| \int_{\sigma/2 - ix}^{\sigma/2 + ix} F_y (s) x^s \frac{ds}{s} \biggr|,
\end{equation}
where $\sigma > 0$ will be chosen later, and we remind the reader that the random Euler product is defined as $F_y(s) \coloneqq \prod_{p\leq y} (1-\frac{f(p)}{p^{s}})^{-1}$. One may suspect that the correct choice of $\sigma$ that minimises the right-hand side would be $\sigma (x,y) = \alpha(x,y)$, where $\alpha (x,y)$ is the saddle point, however will see that this is actually sub-optimal. By an application of the triangle inequality, we have
\begin{equation}\label{equ:triangle}
\E \Bigl| \sum_{\substack{n \leq x \\ P(n) \leq y}} f(n) \Bigr| \ll x^{\sigma/2} \int_{-x}^{x} \E \biggl| \frac{F_y (\sigma / 2 + it)}{\sigma / 2 + it} \biggr| \, dt . 
\end{equation}
On first glance, this application of the triangle inequality looks like it should be wasteful, since we are losing any possible cancellation in the oscillatory integral. We will later explain why this is not the case on most of our range of $y$. 

Proceeding, note that for any fixed $t \in \R$, the distribution of $\bigl( f(p) \bigr)_{p \leq y}$ is identical to the distribution of $( f(p) p^{-it} )_{p \leq y}$, so it follows that $\E |F_y (\sigma / 2 + it)| = \E | F_y (\sigma/2)|$. Therefore, we obtain the bound
\[
\E \Bigl| \sum_{\substack{n \leq x \\ P(n) \leq y}} f(n) \Bigr| \ll x^{\sigma/2} \E |F_y (\sigma/2)| (\log x) ,
\]
so long as $\sigma$ is not too small. Now, for any fixed $\sigma > 2/3$, say, the quantity $\log |F_y (\sigma / 2)|$ is approximately Gaussian with zero mean and variance $\sim \frac{1}{2} \sum_{p \leq y} \frac{1}{p^{\sigma}}$. The moment generating function is therefore
\begin{equation}\label{equ: gamma}
 \E |F(\sigma / 2 )|^\gamma = \E \exp \Bigl( \gamma \log | F_y (\sigma / 2 ) | \Bigr) \approx \exp \Bigl( \frac{\gamma^2}{4} \sum_{p \leq y} \frac{1}{p^{\sigma}} \Bigr) \approx \zeta (\sigma, y)^{\frac{\gamma^2}{4}}.   
\end{equation}
Note that the quadratic growth in the exponent means that $\E |F_y (\sigma/2)|$ can be significantly smaller than $\sqrt{\E |F_y (\sigma/2)|^2}$, and the latter bound corresponds to the square root of the Rankin's trick upper bound~\eqref{equ:rankins trick}. We prove such a result in Lemma~\ref{l:exp result moderate y}, which gives the above on order of magnitude whenever $\sigma > 2/3$, before the higher prime powers begin taking effect. Similar calculations for $\sigma$ close to one can be found in~\cite[Euler Product Result 1]{Harperhigh}. Taking $\gamma = 1$, and supposing that~\eqref{equ: gamma} holds as an upper bound, we have
\[
\E \Bigl| \sum_{\substack{n \leq x \\ P(n) \leq y}} f(n) \Bigr| \ll x^{\sigma/2} \zeta(\sigma, y)^{1/4} (\log x) . 
\]
The first two terms on the right-hand side can be written as $\bigl( (x^2)^{\sigma} \zeta(\sigma,y) \bigr)^{1/4}$, which is minimised by choosing $\sigma$ to be the saddle point $\sigma = \alpha (x^2,y)$. Taking this value of $\sigma$, and applying~\eqref{equ:rankin to true count comparison}, we have
\[
\E \Bigl| \sum_{\substack{n \leq x \\ P(n) \leq y}} f(n) \Bigr| \ll \Psi(x^2, y)^{1/4} (\log x)^{9/8} (\log y)^{1/8}.
\]
which gives our theorem in the range $C \log x \leq y \leq x^{\frac{1}{\log \log x}}$. We now explain why this argument should be relatively efficient, certainly in the range $(\log x)^{1 + \varepsilon} \leq y \leq x^{\frac{1}{(\log \log x)^{1 + \varepsilon}}}$. In the remainder of this section, for compactness, we set $\alpha = \alpha (x^2,y)$. Also, for simplification, we restrict our attention to the study of the bounded integral,
\[
\E \biggl| \int_{-1/2}^{1/2} F_y (\alpha/2 + it) x^{it} \, dt \biggr|.
\]
This suffices for our understanding since breaking up~\eqref{equ: Perron application} will only increase our bound by a logarithmic factor. Now, similarly, to as mentioned before, each fixed $t \in [-1/2, 1/2]$, the quantity $\log| F_y (\alpha/2 + it)|$ is approximately Gaussian with zero mean and variance $\approx \frac{1}{2} \sum_{p \leq y} \frac{1}{p^{\alpha}}$. These Gaussians should begin to decorrelate when $t$ varies by $\approx \frac{1}{\log y}$, so letting $G_j$ be independent random variables with distribution $\mathcal{N}(0, \frac{1}{2} \sum_{p \leq y} \frac{1}{p^{\alpha}})$, one might suspect that
\begin{equation}\label{equ: max}
\max_{t \in [-1/2, 1/2]} \log |F_y (\alpha / 2 + it)| \approx \max_{|j| \leq \log y/2} G_j \approx \biggl( \log \log y \sum_{p \leq y} \frac{1}{p^{\alpha}} \biggr)^{1/2} ,
\end{equation}
with high probability. In contrast, assuming Gaussian behavior of $\log |F_y (\alpha/2 + it)|$, one can find that the dominant contribution to $ \E |F(\alpha / 2 + it )|^\gamma$ comes from increasingly large values of $\log |F_y (\alpha / 2 + it)|$, specifically from events where $\log |F_y (\alpha / 2 + it)| = \frac{\gamma}{2} \sum_{p \leq y} \frac{1}{p^{\alpha}} + O \bigl(\bigl( \sum_{p \leq y} \frac{1}{p^{\alpha}} \bigr)^{1/2} \bigr)$. Similar ideas are present in the analysis of moments of the Riemann zeta function, for example, in the works of Soundararajan~\cite{SoundMoment} and Harper~\cite{harper2013sharp}. Therefore, by comparing to~\eqref{equ: max}, if $\alpha$ is sufficiently smaller than $1$, then the dominant contribution to $\E |F(\alpha/2 + it)|$ comes from events where $\log |F_y (\alpha / 2 + it)|$ is significantly larger than the probable size of the maximum on $[-1/2, 1/2]$, that is, \textit{the moments $\E |F(\alpha/2 + it)|$ are controlled by very unlikely events}, and the calculation in~\eqref{equ: variance evaluation} highlights that this transition happens roughly when $y \approx x^{\frac{1}{\log \log x}}$. To see how this motivates the efficiency of our argument, let $E(t)$ denote the event that $\log |F_y (\alpha/2 + it)| > (\frac{1}{2} - \varepsilon) \sum_{p \leq y} \frac{1}{p^{\alpha}}$, and let $E^c (t)$ be its complement. We have
\[
\E \biggl| \int_{-1/2}^{1/2} F_y (\alpha/2 + it) x^{it} \, dt \biggr| = \E \biggl| \int_{-1/2}^{1/2} \bigl( \mathbf{1}_{E(t)} + \mathbf{1}_{E^c (t)} \bigr) F_y (\alpha/2 + it) x^{it} \, dt \biggr|.
\]
Seeing as $E(t)$ includes the dominant contribution to the first moment, one can calculate that $\E \mathbf{1}_{E^c (t)} |F_y (\alpha / 2 + it)|$ is relatively small, and so the right-hand side is approximately
\[
\E \biggl| \int_{-1/2}^{1/2} \mathbf{1}_{E(t)} F_y (\alpha/2 + it) x^{it} \, dt \biggr| .
\]
Now, if $E(t)$ occurs for any $t \in [-1/2, 1/2]$, then it must be the case that
\[
\max_{t \in [-1/2,1/2]} \log |F_y (\alpha / 2 + it)| > \Bigl(\frac{1}{2} - \varepsilon \Bigr) \sum_{p \leq y} \frac{1}{p^{\alpha}}. 
\] 
Comparing this to~\eqref{equ: max}, we see that the above expectation is dominated by events where the maximum of $\log |F_y (\alpha/2 + it)|$ is unusually large. When this happens, we would only expect to find one interval of length $\ll \frac{1}{\log y}$ where values of the size of this maximum occur, and the Euler product would typically be relatively small at all other points. This means that the dominant contribution to the above integral comes from a single interval where the Euler product is roughly fixed, suggesting that the use of the triangle inequality in~\eqref{equ:triangle} should be efficient. This analysis is closely related to the study of high moments of sums of random multiplicative functions~\cite{Harperhigh}, and we note that the above argument suggests a possible route for proving a similar lower bound. Furthermore, for $0<q<1$, our result suggests that we may have
\[
\E \Bigl| \sum_{\substack{n \leq x \\ P(n) \leq y}} f(n) \Bigr|^{2q} \ll_q \Psi (x^{1/q}, y)^{q^2} ,
\]
up to some lower-order factors. We do not pursue these lines of enquiry here.

\subsection{Large smoothness parameter: Multiplicative Chaos}\label{s: large y explained}
To cover the remaining ranges and prove Theorem~\ref{Thm: main}, we first describe how one handles the somewhat delicate range when $y$ is very close to $x$. Our analysis actually allows us to prove better than squareroot cancellation on a larger range of $y$ (though Theorem~\ref{Thm: main quantitative} is always superior whenever $y \leq x^{\frac{1}{4 \log \log x}}$). Specifically, we are able to prove the following:
\begin{theorem}[Large $y$]\label{t:small u upper bound}
For any $e^{(\log \log x)^2} \leq y \leq x$, say, we have
\[
\E \biggl| \sum_{\substack{n \leq x \\ P(n) \leq y}} f(n) \biggr| \ll \frac{\Psi (x,y)^{1/2}}{\Bigl(\log \min \Bigl\{ \frac{1}{1-\alpha(x,y)}, \log x \Bigr\} \Bigr)^{1/4}},
\]
where the implied constant is uniform in $y$.
\end{theorem}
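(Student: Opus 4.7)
The plan is to reduce the first absolute moment to a mean square of a random Euler product evaluated at the shifted line $\Re s = \alpha/2$ for $\alpha = \alpha(x,y)$ the saddle point, and then extract the advertised saving by adapting Harper's critical Gaussian multiplicative chaos (GMC) argument to this setting. Following the conditioning-plus-Plancherel scheme that produces~\eqref{equ:full sum relation to EP} (split the $y$-smooth $n \leq x$ by their large-prime part, apply Cauchy--Schwarz on the inner sum, and Plancherel on the Dirichlet polynomial after pulling out the weight $n^{-\alpha/2}$), I would first obtain
\[
\E\Bigl|\sum_{\substack{n\leq x\\ P(n)\leq y}} f(n)\Bigr| \ll x^{\alpha/2}\,\E\Bigl(\int_{-1/2}^{1/2}|F_y(\alpha/2+it)|^2\,dt\Bigr)^{1/2}.
\]
In view of~\eqref{equ:rankin to true count comparison}, this reduces Theorem~\ref{t:small u upper bound} to beating the trivial Hölder bound $\zeta(\alpha,y)^{1/2}$ on the Euler product integral by a factor of $(\log\min\{1/(1-\alpha),\log x\})^{1/4}$.

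To produce the saving, I would factor $F_y = F_z \cdot H_{z,y}$ with $z = \min\{e^{1/(1-\alpha)},y\}$ and $H_{z,y}(s) = \prod_{z < p \leq y}(1-f(p)/p^s)^{-1}$. The computation behind~\eqref{equ: variance evaluation} shows that $\log|F_z(\alpha/2+it)|$ is an approximately Gaussian log-correlated field with total variance $V \asymp \log(1/(1-\alpha))$, capped at $\log\log y$ when $e^{1/(1-\alpha)} > y$ (hence the $\log x$ in the $\min$); since $p^{-\alpha} \asymp 1/p$ for $p \leq z$, this field is structurally identical to the critical log-correlated field analysed in~\cite{HarperLow}. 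Conditioning on $\{f(p)\}_{z < p \leq y}$ and treating $|H_{z,y}|^2$ as a weight with bounded multiplicative fluctuations and mean $\prod_{z < p \leq y}(1-p^{-\alpha})^{-1}$, I would then run Harper's ballot/barrier argument on the inner expectation: truncate $F_z$ at dyadic scales $z_j = e^{e^j} \leq z$ and impose at each such scale that $\sum_{p \leq z_j}\Re(f(p)/p^{\alpha/2+it})$ stays below the branching-Brownian-motion envelope. The ballot estimate gives this event probability $\asymp 1/\sqrt{V}$, and restricting the integral to the barrier event plus Hölder on the remainder yields the desired saving $V^{-1/4}$ on the square root. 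Combined with the reduction and~\eqref{equ:rankin to true count comparison}, this gives the theorem.

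The main technical obstacle is porting Harper's critical GMC machinery from $\Re s = 1/2$ to the shifted line $\Re s = \alpha/2$, with total variance parameter $V \asymp \log(1/(1-\alpha))$ in place of $\log\log y$. Concretely one needs, uniformly for $\alpha \in (1/2,1)$, a Gaussian/Dirichlet-polynomial coupling for $\log|F_z(\alpha/2+it)|$ with the correct log-correlated covariance $\approx -\tfrac{1}{2}\log|t-t'|$ on scales $|t-t'|\geq 1/z$, ballot/barrier bounds calibrated to the variance parameter $V$, and enough regularity of $|H_{z,y}(\alpha/2+it)|^2$ in $t$ to ensure it does not interfere with the barrier conditioning. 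The edge case $1/(1-\alpha) > \log x$ (equivalently $y$ extremely close to $x$) is handled by simply capping $z$ at $y$, which recovers the Harper-type $(\log\log x)^{-1/4}$ saving; the rest of the work is bookkeeping to track how $V$ depends on $\alpha$ across the full range $e^{(\log\log x)^2}\leq y\leq x$.
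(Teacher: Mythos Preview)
Your approach has the right conceptual core---isolate the small-prime Euler product $F_z$ with $z \approx e^{1/(1-\alpha)}$, which behaves as though on the half-line, and extract a GMC saving of order $V^{1/4}$ with $V = \log(1/(1-\alpha))$---but the initial reduction is fatally lossy. The bound you write, combined with the trivial H\"older estimate on the integral, gives $x^{\alpha/2}\zeta(\alpha,y)^{1/2}$; by Lemma~\ref{l:smooth count in terms of saddle point} this is $\asymp \Psi(x,y)^{1/2}(\log x\log y)^{1/4}$, \emph{not} $\Psi(x,y)^{1/2}$ as you implicitly assume when invoking~\eqref{equ:rankin to true count comparison}. Even granting a saving of $V^{1/4}$ on the Euler product integral, the end result would be $\Psi(x,y)^{1/2}(\log x\log y)^{1/4}/V^{1/4}$, which is worse than the trivial bound since $V \leq \log\log x$. (Separately, your claim that $|H_{z,y}|^2$ has ``bounded multiplicative fluctuations'' is false: by~\eqref{equ: variance evaluation} the variance of $\log|H_{z,y}|$ is $\asymp u$, which is unbounded at the lower end of the range.)

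The paper avoids this loss by conditioning in the opposite direction. Rather than producing the full $F_y$ and then peeling off $H_{z,y}$ as a weight, it writes $n = m\cdot s$ with $p \mid m \Rightarrow p \in (z,y]$ and $P(s) \leq z$, conditions on the \emph{small} primes $p \leq z$, and applies Cauchy--Schwarz to the conditional expectation over the large primes. This removes the large-prime randomness entirely, leaving $\E\bigl(\sum_m |\sum_{s\leq x/m,\,P(s)\leq z} f(s)|^2\bigr)^{1/2}$. The essential new input is a short-interval count (Lemma~\ref{l: smooths in short intervals estimate}) for integers with all prime factors in $(z,y]$, which converts $\sum_m$ into a density factor $\asymp (1-\alpha)\Psi(x,y)$. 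After Plancherel only $F_z$ remains, and the factor $(1-\alpha)$ exactly cancels the trivial size $\zeta(\alpha,z)\asymp 1/(1-\alpha)$, so the reduction is tight; Lemma~\ref{l:Euler product on half line} then supplies the $V^{1/4}$ saving directly, with no $H_{z,y}$ to contend with. The full range of $y$ is then covered by iterating this via the triangle inequality (Proposition~\ref{p:small u upper bound} plus the decomposition in the proof of Theorem~\ref{t:small u upper bound}).
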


\begin{xremark}
By an application of Lemma~\ref{l:saddle point approx}, in the range $x^{\frac{1}{\log \log x}} \leq y \leq x$, this result gives a saving of size $(\log \log x)^{1/4}$, similarly to~\cite{HarperLow}.
\end{xremark}

To outline the main ideas behind this result, we begin by assuming that $y$ is close to $x$, where we might expect to be close to the critical GMC regime studied in~\cite{HarperLow}. In that work, having established an upper bound of the form~\eqref{equ:full sum relation to EP}, the key step involves showing that 
\begin{equation}\label{equ: critical chaos ep from harper}
\E \biggl( \int_{-1/2}^{1/2} |F_x (1/2 + it)|^2 \, dt \biggr)^{1/2} \ll \biggl( \frac{\log x}{\sqrt{\log \log x}} \biggr)^{1/2},
\end{equation}
giving better than squareroot cancellation. Analogously to the previous section, we need to prove a statement along the lines of
\[
\E \biggl( \int_{-1/2}^{1/2} |F_y (\alpha/2 + it)|^2 \, dt \biggr)^{1/2} = o \left( \sqrt{\zeta(\alpha,y)} \right) ,
\]
where $\alpha = \alpha(x,y)$. As mentioned, the main difference between our case and the work of Harper~\cite{HarperLow} is that our Euler products \emph{are not} evaluated on the half-line. This means that the random process $\bigl( \log |F_y (\alpha/2 + it)| \bigr)_{t \in [-1/2,1/2]}$ does not necessarily behave like a log-correlated Gaussian field, and so it is not clear that we can connect our Euler products to critical Gaussian multiplicative chaos. The key observation is that the portion of the Euler product coming from $p \leq e^{\frac{2}{1 - \alpha}}$, say, \emph{does} behave like it is evaluated on the half-line. To be precise, taking $z = e^{\frac{2}{1 - \alpha}}$, we have
\[
F_z (\alpha / 2 + it) = F_{z} \biggl( 1/2  - \frac{1}{\log z} + it \biggr).
\]
Note that the specific choice of $2$ in the numerator of $\frac{2}{1 - \alpha}$ is unimportant\footnote{In Section~\ref{Sec: GMC}, we choose a slightly different constant to allow for direct use of a result from~\cite{HarperLow}. This is specifically seen in Lemma~\ref{l:Euler product on half line}.} and is just chosen so that the second term is exactly $\frac{1}{\log z}$. The important point (which is shown rigorously in~\cite{HarperLow}) is that a shift of order $\frac{1}{\log z}$ has no impact on the behaviour of these Euler products, so similarly to~\eqref{equ: critical chaos ep from harper}, we have
\[
\E \biggl( \int_{-1/2}^{1/2} \Bigl| F_z \Bigl( \frac{1}{2} + \frac{1}{\log z} + it \Bigr) \Bigr|^2 \, dt \biggr)^{1/2} \ll \biggl( \frac{\log z}{\sqrt{\log \log z}} \biggr)^{1/2} \asymp \Biggl( \frac{1}{(1 - \alpha) \sqrt{\log \frac{1}{1 - \alpha}}} \Biggr)^{1/2} .
\]
This is the contents of Lemma~\ref{l:Euler product on half line}, and it allows us to obtain better than squareroot cancellation on these partial Euler products over small primes. The key idea then is that we only ``win'' on products over small primes. By the triangle inequality, it suffices to handle sums roughly of the form 
\[
\E \Bigl| \sum_{\substack{n \leq x \\ z < p \leq y}} f(n) \Bigr| = \E \Bigl| \sum_{\substack{1 < m \leq x \\ p | m \Rightarrow p \in (z, y]}} f(m) \sum_{\substack{n \leq x/m \\ P(n) \leq z}} f(n) \Bigr|,
\]
We can apply the Cauchy--Schwarz inequality to the conditional expectation at this stage in the proof, leaving only the randomness from these small primes $p \leq z$. In comparison to~\cite{HarperLow}, the new input needed is a precise short intervals bound for numbers with restricted prime factorisation. This is provided by Lemma~\ref{l: smooths in short intervals estimate}.

\subsection{Sums over very smooth numbers}\label{s: small y explained}
Given Theorems~\ref{Thm: main quantitative} and~\ref{t:small u upper bound}, to complete the proof of Theorem~\ref{Thm: main}, we need to handle the case $2 \leq y \leq C \log x$ (for a large constant $C$). We begin by mentioning the most extreme case: when $y=2$, we have
\[
\E \Bigl| \sum_{\substack{n \leq x \\ P(n) \leq 2}} f(n) \Bigr| = \E \Bigl| \sum_{n \leq \lfloor \frac{\log x}{\log 2} \rfloor} f(2)^n \Bigr| = \int_0^1 \Bigl| \sum_{n \leq \lfloor \frac{\log x}{\log 2} \rfloor} \mathrm{e} (n \theta) \Bigr| \, d \theta \ll \log \log x,
\]
which is very small relative to $\Psi(x,2)^{1/2} \asymp (\log x)^{1/2}$. A similar analysis becomes difficult when, for example, $y \geq \log \log x$, so instead our analysis largely follows the same blueprint as described for moderately sized $y$, in Section~\ref{s:outline of moderate y} (applying Perron's formula followed by an estimate for the first absolute moment of the Euler product). However, when $y \leq C \log x$, say, one can no longer expect a result such as~\eqref{equ: gamma} to hold, seeing as we need to account for primes with higher multiplicity (see Remark~\ref{r:EP transition range}). Despite this, we are still able to show cancellation in the Euler products (Lemma~\ref{l:exp for small y}), which allows us to prove the following:
\begin{theorem}[Small $y$]\label{t: small y} 
The following results hold:
\begin{enumerate}[label=(\roman*)]
\item There exists some constant $D \geq 5$ such that, for $D \leq y \leq \log x/\log \log x$, say, we have
\[
\E \Bigl| \sum_{\substack{n \leq x \\ P(n) \leq y}} f(n) \Bigr| \ll  x^{\alpha(x,y)/2} (\log x) \prod_{p \leq y} \biggl( \frac{1}{3} \log \biggl( \frac{1}{1-p^{-\alpha(x,y)/2}} \biggr) \biggr) + \log x . 
\]
In particular, the previous display is
\[
\ll  \sqrt{\Psi(x,y)} (\log x)^{5/4} (\log y)^{1/4} \biggl( \sqrt{\frac{y}{\log x}} \log \biggl( \frac{\log x}{y} \biggr) \biggr)^{\pi(y) - \pi(\sqrt{y})} + \log x . 
\]
where $\pi (y) = \# \{ p \leq y \}$. This gives better than squareroot cancellation in the range $D \leq y \leq \log x/\log \log x$.
\item For any large fixed constant $C \geq 1$, in the range $\log x / \log \log x \leq y \leq C \log x$, we have
\[
\E \Bigl| \sum_{\substack{n \leq x \\ P(n) \leq y}} f(n) \Bigr| \ll \sqrt{\Psi (x,y)/\exp ((\log x)^{c})}, 
\]
where $c > 0$ is a small constant depending on $C$.
\item Suppose that $2 \leq y \leq D$ for any large constant $D$ (in particular, the one from (i)), then we have
\[
\E \Bigl| \sum_{\substack{n \leq x \\ P(n) \leq y}} f(n) \Bigr| \ll \sqrt{\Psi(x,y)\frac{\log \log x}{\log x}}
\]
where the implied constant depends on $D$.
\end{enumerate}

\end{theorem}
This result shows that we have better than squareroot cancellation uniformly for $y \leq C \log x$ (for any large fixed constant $C>0$). We have not made a significant attempt to optimise these results, instead choosing to provide a fairly concise proof of the qualitative result, Theorem~\ref{Thm: main}, in this range. That being said, we believe that the calculations performed in this proof, in particular Lemma~\ref{l:exp for small y}, may be of particular interest to some readers.

\subsection{Organization} 
In Section~\ref{Sec: smooth}, we introduce some (mostly classical) results about smooth numbers. Importantly, we give a useful estimate for short sums over integers with restricted prime factorisations, Lemma~\ref{l: smooths in short intervals estimate}. In Section~\ref{Sec: Euler}, we prove results for random Euler products with the feature that the evaluation is not necessarily on the half-line. Section~\ref{sec: moderate} contains the proof of Theorem~\ref{Thm: main quantitative}, which handles a large range of smoothness parameters, followed by Section~\ref{Sec: GMC}, which allows us to handle the case when $y$ is close to $x$. Finally, in Section~\ref{Sec: small y}, we prove Theorem~\ref{t: small y} to cover the ``very smooth'' case. Appendix~\ref{s:appendix} contains a proof of Lemma~\ref{l: smooths in short intervals estimate}, which follows similarly to previous work of Hildebrand~\cite{Hildebrand}.

\subsection*{Acknowledgement}
The authors would like to thank Adam Harper, Carl Schildkraut and K. Soundararajan for their interest in this paper and for helpful discussions. SH is supported by the Swinnerton-Dyer scholarship at the Warwick Mathematics Institute Centre for Doctoral Training.
MWX is supported by a Simons Junior Fellowship from the Simons Foundation. Part of the work was done during several visits of MWX at Warwick Mathematics Institute, and the warm hospitality is greatly appreciated. 

\section{Smooth Number Results}\label{Sec: smooth}

In this section, we state some important results about smooth numbers that are used frequently throughout the paper. First of all, we state some estimates for the saddle point and its relation to the count of smooth numbers. We then state an estimate for sums over integers that have restrictions on their prime factorization, which follows similarly to previous work of Hildebrand~\cite{HildTen86}, and whose proof is delayed until Appendix~\ref{s:appendix}.

We begin by stating some classical results on smooth numbers are frequently used throughout the paper. 

\begin{lemma}[Saddle point estimate 1]\label{l:saddle point approx}  
For $2 \log x \leq y \leq x$, say, $u= \frac{\log x}{\log y}$ and for $\alpha(x, y)$ the saddle point, we have
\[
\alpha (x,y) = 1 - \frac{\log (u \log (u + 1))}{\log y} + O \biggl( \frac{1}{\log y} \biggr).
\] 
In fact, for $2 \log x \leq y \leq x^{1/10}$, say, we have 
\[
\alpha (x,y) = 1 - \frac{\log (u \log u)}{\log y} + O \biggl( \frac{\log \log u}{(\log y) (\log u)} + \frac{\log x}{y \log y}\biggr).
\] 
\end{lemma}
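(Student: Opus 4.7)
The plan is to extract $\alpha$ from its defining equation
\[
\sum_{p \leq y} \frac{\log p}{p^{\alpha} - 1} = \log x
\]
by first producing a tractable asymptotic for the left-hand side and then solving the resulting implicit equation by bootstrapping. This is essentially the strategy of Hildebrand--Tenenbaum, which I would follow closely.

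First I would turn the sum into an integral. Partial summation against the Chebyshev function $\theta(t) = \sum_{p \leq t} \log p = t + O(t/\exp(c\sqrt{\log t}))$ gives
\[
\sum_{p \leq y} \frac{\log p}{p^\alpha - 1} = \int_{2^-}^{y} \frac{d \theta(t)}{t^{\alpha}(1 - t^{-\alpha})} = \int_{2}^{y} \frac{dt}{t^{\alpha} - t^{\alpha - 1}} + O \bigl( \text{PNT error} \bigr).
\]
I would split the remaining integral at $z = e^{1/(1-\alpha)}$ (assuming $\alpha$ bounded away from $1$; otherwise the range $(\log x)^2 \leq y \leq x$ forces $1 - \alpha \gg 1/\log y$ and one handles the near-$1$ case separately). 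On $[z,y]$ the integrand is $(1 + O(1/t^{\alpha})) t^{-\alpha}$, contributing $\frac{y^{1-\alpha} - z^{1-\alpha}}{1-\alpha}(1+o(1))$, while on $[2,z]$ the factor $(1-t^{-\alpha})^{-1}$ has to be handled by writing $t^{\alpha} = t \cdot t^{-(1-\alpha)}$ and expanding; this contributes a term of size $\log(1/(1-\alpha)) + O(1)$, which is harmless compared with $\log x$ in the regime under consideration.

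Next I would solve the implicit equation. Writing $\beta \defeq 1 - \alpha$ and $L \defeq \log y$, equating with $\log x = uL$ yields
\[
\frac{y^{\beta}}{\beta} = uL \bigl( 1 + o(1) \bigr),
\]
so taking logarithms gives
\[
\beta L = \log u + \log L + \log \beta + O(1).
\]
A first rough estimate from this equation is $\beta \asymp (\log u)/L$ (when $u \geq 2$), which can be substituted back to obtain $\log \beta = \log \log u - \log L + O(1)$, and then re-substituting
\[
\beta L = \log u + \log \log u + O(1) = \log(u \log u) + O(1),
\]
giving the first statement on the range where $u$ is bounded away from $1$. The edge case $u \to 1$ (i.e.\ $y$ close to $x$) is handled by noting that then $\beta$ is forced to be tiny, and one verifies directly that replacing $\log u$ by $\log(u+1)$ absorbs the singularity into the $O(1/\log y)$ error. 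Combining both regimes yields the first formula.

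For the sharper second estimate in the range $y \leq x^{1/\log\log x}$, one has $u \geq \log\log x$, so $\log u$ is itself at least doubly logarithmic in $x$. Here I would perform one further iteration of the bootstrap: using $\beta L = \log(u\log u) + O(1)$ we get $\log \beta = \log \log(u\log u) - \log L + O(1/\log u) = \log\log u - \log L + O(\log\log u/\log u)$, and reinserting into $\beta L = \log u + \log L + \log\beta + O(1)$, the lower-order constants cancel to within $O(\log\log u/\log u)$, giving the claimed error $O(\log\log u/((\log y)(\log u)))$. The main obstacle is simply the careful bookkeeping of error terms: one must check that the PNT error and the auxiliary contribution from the range $[2,z]$ both remain strictly smaller than the claimed final error once divided by $\log y$, and that the bootstrap is consistent across both the ``small $u$'' and ``large $u$'' ends of each stated range, particularly near $y \approx x$ where $\beta \to 0$.
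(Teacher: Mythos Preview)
The paper does not prove this lemma at all: its entire ``proof'' is the single sentence ``These estimates follow from~[Lemmas~1 and~2]{HildTen86}.'' Your proposal goes further and sketches the underlying Hildebrand--Tenenbaum argument itself, and the sketch is essentially correct: evaluate $\sum_{p\le y}\frac{\log p}{p^\alpha-1}$ via the prime number theorem to obtain an implicit equation of the shape $y^\beta/\beta \sim u\log y$, then bootstrap once for the $O(1/\log y)$ statement and a second time for the refined $O(\log\log u/((\log y)\log u))$ error. So your approach is not different from the paper's in spirit---you are simply unpacking the cited reference rather than invoking it.
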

\begin{proof}
These estimates follow from~\cite[Lemmas~1 and~2]{HildTen86}.
\end{proof}
In certain cases, such as when $y$ is quite small, it will be convenient to use the following formulation:
\begin{lemma}[Saddle point estimate 2]\label{l:saddle point approx uniform}  
Uniformly for $2 \leq y \leq x$, we have
\[
\alpha (x,y) = \frac{\log ( 1 + y/\log x )}{\log y} \biggl( 1 + O \biggl( \frac{\log \log (1+y)}{\log y} \biggr) \biggr).
\] 
\end{lemma}
\begin{proof}
This is~\cite[Theorem~2]{HildTen86}.
\end{proof}

\begin{lemma}[Explicit smooth count]\label{l:smooth count in terms of saddle point}
Uniformly for $x \geq y \geq 2$, the number of $y$ smooth numbers up to $x$ satisfies
\[
\Psi(x, y)=\frac{x^\alpha \zeta(\alpha, y)}{\alpha \sqrt{2 \pi(1+(\log x) / y) \log x \log y}}\left(1+O\left(\frac{1}{\log (u+1)}+\frac{1}{\log y}\right)\right),
\]
where $\alpha = \alpha (x,y)$ is the saddle point.
\end{lemma}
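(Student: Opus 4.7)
The plan is to prove this via the saddle-point method applied to Perron's formula. The generating Dirichlet series for the indicator of $y$-smooth numbers is exactly the Euler product $\zeta(s,y)=\prod_{p\le y}(1-p^{-s})^{-1}$, holomorphic on $\Re s > 0$. I would begin with the truncated Perron formula, integrated on the vertical line $\Re s = \alpha$ where $\alpha = \alpha(x,y)$ is the saddle point:
\[
\Psi(x,y) = \frac{1}{2\pi i}\int_{\alpha - iT}^{\alpha + iT} \zeta(s,y)\frac{x^{s}}{s}\,ds + O(\text{Perron error}),
\]
for a suitable truncation height $T$. Shifting to $\Re s = \alpha$ rather than the usual $\Re s = 1 + 1/\log x$ is crucial: on this line the integrand $|\zeta(\alpha+it,y)x^{\alpha+it}|$ is maximised at $t=0$, which is precisely the saddle-point condition.

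The main step is to expand the integrand around $t=0$. Writing $\Phi(t) = \log\zeta(\alpha+it,y) + (\alpha+it)\log x$, Taylor expansion gives
\[
\Phi(t) = \Phi(0) + it\bigl(\log x + (\zeta'/\zeta)(\alpha,y)\bigr) - \tfrac{t^{2}}{2}\phi_{2}(\alpha) + O\bigl(|t|^{3}\phi_{3}(\alpha)\bigr),
\]
where $\phi_{k}(\alpha) \defeq (-1)^{k}(d^{k}/ds^{k})\log\zeta(s,y)\big|_{s=\alpha}$. The linear term vanishes by the defining equation of $\alpha$ given in the introduction. A direct computation, combined with partial summation against the prime number theorem, yields
\[
\phi_{2}(\alpha) = \sum_{p\le y}\frac{p^{\alpha}(\log p)^{2}}{(p^{\alpha}-1)^{2}} = \bigl(1 + (\log x)/y\bigr)\log x \log y \cdot (1 + O(1/\log(u+1) + 1/\log y)),
\]
which identifies the quadratic coefficient with the factor appearing under the square root in the statement.

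For the central range $|t|\le t_{0}$ with $t_{0}$ chosen so that $t_{0}^{2}\phi_{2}(\alpha)\to\infty$ but $t_{0}^{3}\phi_{3}(\alpha)\to 0$, I would evaluate
\[
\frac{x^{\alpha}\zeta(\alpha,y)}{2\pi}\int_{-t_{0}}^{t_{0}} \frac{e^{-t^{2}\phi_{2}(\alpha)/2}}{\alpha+it}\bigl(1+O(|t|^{3}\phi_{3}(\alpha))\bigr)\,dt,
\]
replacing $\alpha + it$ by $\alpha$ on the main term (the odd imaginary part integrates against an even Gaussian to zero up to the truncation) and computing the Gaussian integral exactly. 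This produces the claimed main term $\frac{x^{\alpha}\zeta(\alpha,y)}{\alpha\sqrt{2\pi\phi_{2}(\alpha)}}$ with a multiplicative factor $1 + O(1/\log(u+1) + 1/\log y)$.

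The main obstacle, and the technically hardest step, is the tail estimate: one must show that the contribution of $t_{0} < |t| \le T$ (together with the truncation error from $|t| > T$) is negligible compared to the main term. This requires a bound of the shape
\[
\bigl|\zeta(\alpha+it,y)\bigr| \le \zeta(\alpha,y)\exp\bigl(-c\,\min(t^{2}\phi_{2}(\alpha),\, \log y,\, \ldots)\bigr)
\]
valid for all $|t|$ in the tail range. Such a bound requires extracting cancellation from $\sum_{p\le y} p^{-\alpha}(1-\cos(t\log p))$, which is delicate because individual primes may have $t\log p$ close to $2\pi\Z$. I would follow the Hildebrand--Tenenbaum strategy of splitting the primes into dyadic ranges and invoking a Vinogradov-type lower bound for the sum away from such exceptional $t$, combined with a pigeonhole argument to rule out simultaneous resonances across all dyadic scales. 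Together with the choice of $T$ large enough to absorb the Perron error, this closes the argument.
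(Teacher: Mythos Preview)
The paper does not prove this lemma: its entire proof reads ``This is~\cite[Theorem~1]{HildTen86}.'' Your sketch is a faithful outline of the Hildebrand--Tenenbaum saddle-point argument itself, so you are supplying the proof that the paper merely cites; the approach is correct and is exactly the one in the cited reference.
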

\begin{proof}
This is~\cite[Theorem~1]{HildTen86}.
\end{proof}

\begin{lemma}[Smooth number comparison 1]\label{l: smooths below x/d vs below x}
For $2 \leq y \leq x$, and $1 \leq d \leq x$, we have
\[
\Psi(x/d, y) \ll \frac{1}{d^{\alpha}} \Psi(x, y) ,
\]
where $\alpha = \alpha (x, y)$ denotes the saddle-point corresponding to the $y$-smooth numbers less than $x$.
\end{lemma}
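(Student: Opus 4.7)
The proof combines Rankin's trick with the Hildebrand--Tenenbaum asymptotic (Lemma~\ref{l:smooth count in terms of saddle point}). The natural starting point is Rankin's trick applied with the exponent $\alpha = \alpha(x,y)$ (as opposed to the saddle point associated to $x/d$):
\[
\Psi(x/d, y) \leq \sum_{P(n) \leq y} \left(\frac{x/d}{n}\right)^{\alpha} = d^{-\alpha}\, x^{\alpha}\, \zeta(\alpha, y).
\]
The desired factor $d^{-\alpha}$ appears immediately; the task is to identify $x^{\alpha}\zeta(\alpha, y)$ with $\Psi(x, y)$ up to an absolute constant without picking up logarithmic losses.

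A direct invocation of Lemma~\ref{l:smooth count in terms of saddle point} for $\Psi(x, y)$ introduces a spurious factor of size $\sqrt{(1+\log x/y)\log x \log y}$, so instead I would apply Lemma~\ref{l:smooth count in terms of saddle point} to $\Psi(x/d, y)$. Writing $\alpha' = \alpha(x/d, y)$, this yields
\[
\Psi(x/d, y) \asymp \frac{(x/d)^{\alpha'}\zeta(\alpha', y)}{\alpha'\sqrt{(1+\log(x/d)/y)\log(x/d)\log y}}.
\]
Since $\alpha'$ is the minimizer of $\sigma \mapsto (x/d)^{\sigma}\zeta(\sigma, y)$, we have the key inequality $(x/d)^{\alpha'}\zeta(\alpha', y) \leq (x/d)^{\alpha}\zeta(\alpha, y) = d^{-\alpha}x^{\alpha}\zeta(\alpha, y)$. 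Using Lemma~\ref{l:smooth count in terms of saddle point} to express $x^{\alpha}\zeta(\alpha, y)$ in terms of $\Psi(x, y)$ and dividing, the ratio $\Psi(x/d, y)/(d^{-\alpha}\Psi(x, y))$ reduces, up to absolute constants, to
\[
\frac{\alpha}{\alpha'}\sqrt{\frac{(1+\log x/y)\log x}{(1+\log(x/d)/y)\log(x/d)}}.
\]
By Lemma~\ref{l:saddle point approx}, the saddle point is monotonically decreasing in $x$, so $\alpha' \geq \alpha$ and the prefactor $\alpha/\alpha'$ is at most~1.

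The remaining task is to bound the square-root factor uniformly in $d$. When $d \leq \sqrt{x}$, the quantities $\log(x/d)$ and $\log x$ are comparable (and similarly for the $(1 + \log/y)$ terms), so the bound follows immediately. For larger $d$ with $x/d \geq y$, one can iterate by factoring $d = d_1 d_2$ with $d_1 = \sqrt{x}$ and applying the lemma to $(x, d_1)$ and $(x/d_1, d_2)$ successively; the fact that $\alpha(x/d_1, y) \geq \alpha$ gives $d_2^{-\alpha(x/d_1, y)} \leq d_2^{-\alpha}$, so the telescoped bound retains the desired $d^{-\alpha}$ factor. The endpoint case $x/d \leq y$ (where Lemma~\ref{l:smooth count in terms of saddle point} is not directly applicable) is handled separately via the trivial $\Psi(x/d, y) = \lfloor x/d \rfloor \leq x/d$, for which the claim is verified by explicitly estimating $\zeta(\alpha, y)$ from below (in the spirit of the calculation in~\eqref{equ: variance evaluation}) and using the Hildebrand--Tenenbaum lower bound for $\Psi(x, y)$. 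The main technical obstacle will be ensuring the iteration and the endpoint analysis close with absolute constants, especially in regimes where $y$ is small relative to $\log x$.
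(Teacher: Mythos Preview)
The paper does not prove this lemma at all; it simply cites \cite[Th\'eor\`eme~2.4(i)]{BretecheTenenbaum}. Your proposal is therefore an independent attempt, and the core idea (combine Rankin at the ``wrong'' saddle point with the minimality of $\alpha'=\alpha(x/d,y)$, then feed into Lemma~\ref{l:smooth count in terms of saddle point}) is exactly the right starting manoeuvre.

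The gap is in the iteration. Your own computation shows that after the saddle-point comparison one is left with the factor
\[
\sqrt{\frac{(1+\log x/y)\log x}{(1+\log(x/d)/y)\log(x/d)}},
\]
which for small $y$ is essentially $\log x/\log(x/d)$. Your halving scheme ($d_1=\sqrt{x}$, then repeat) reaches $x/d$ only after $k\asymp \log_2\!\bigl(\log x/\log(x/d)\bigr)$ steps; for $x/d$ near $y$ with, say, $y=(\log x)^{2}$ this is $\asymp \log\log x$, so the accumulated constant is $C^{k}=(\log x)^{O(1)}$. No alternative factorisation of $d$ avoids this, because the obstruction is the unbounded ratio $\log x/\log(x/d)$ itself, not the particular splitting. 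The endpoint case $x/d\le y$ inherits the same difficulty: verifying $y^{1-\alpha}\sqrt{(1+\log x/y)\log x\log y}\ll \zeta(\alpha,y)$ uniformly for all $2\le y\le x$ is precisely the content of the de la Bret\`eche--Tenenbaum estimate and does not follow from Lemma~\ref{l:smooth count in terms of saddle point} alone.

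What is missing is a second-order input: rather than the crude inequality $(x/d)^{\alpha'}\zeta(\alpha',y)\le (x/d)^{\alpha}\zeta(\alpha,y)$, one needs the Taylor expansion of $\sigma\mapsto \sigma\log(x/d)+\log\zeta(\sigma,y)$ around its minimum, which produces a factor $\exp\bigl(-\tfrac{1}{2}\sigma_2(\alpha-\alpha')^2\bigr)$ with $\sigma_2=\sum_{p\le y}(\log p)^2 p^{\alpha}/(p^{\alpha}-1)^2\asymp (1+\log x/y)\log x\log y$. This Gaussian factor is exactly what cancels the square-root ratio from Lemma~\ref{l:smooth count in terms of saddle point}, and it is how the de la Bret\`eche--Tenenbaum argument closes uniformly. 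Without it, your scheme gives $\Psi(x/d,y)\ll d^{-\alpha}\Psi(x,y)(\log x)^{O(1)}$ in the worst case, which is weaker than what the paper needs.
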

\begin{proof}
This is~\cite[Théorème~2.4(i)]{BretecheTenenbaum}.
\end{proof}

In Section~\ref{Sec: GMC}, it will be necessary to estimate short sums over integers whose prime factors all lie in a given range. For this task, we employ the following lemma:

\begin{lemma}[Prime restricted sum]\label{l: smooths in short intervals estimate}
Fix any small $\varepsilon > 0$ and let $x \geq 2$ be a parameter tending to infinity. Suppose $y, h, \delta$ are parameters such that $e^{(\log \log x)^{5/3 + \varepsilon}} \leq y \leq x^{1/\delta}$, $x/y^{1/3} \leq h \leq x/2$, and $\frac{1}{\log y} \leq \delta \leq \frac{1}{10}$. Uniformly in these parameters, we have
\[
\frac{1}{h} \sum_{\substack{x < n \leq x + h \\ p \mid n \Rightarrow p \in (y^\delta, y]}} 1 \ll \frac{1}{x} \cdot \frac{\Psi(x,y)}{\delta \log y} .
\]
\end{lemma}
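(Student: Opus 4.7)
The plan is to express the short-interval count via a truncated Perron formula and evaluate the resulting contour integral by a saddle-point analysis on the line $\Re(s) = \alpha(x,y)$, in the spirit of Hildebrand's treatment of smooth numbers in short intervals. Writing $S(x,h)$ for the sum, the associated Dirichlet series factors cleanly:
\[
F(s) \defeq \sum_{\substack{n \geq 1 \\ p\mid n\,\Rightarrow\, p\in(y^\delta,y]}} \frac{1}{n^s} = \prod_{y^\delta < p\leq y}\bigl(1-p^{-s}\bigr)^{-1} = \frac{\zeta(s,y)}{\zeta(s,y^\delta)}.
\]
A truncated Perron formula with height $T$ a small power of $y$ then yields
\[
S(x,h) = \frac{1}{2\pi i}\int_{\alpha-iT}^{\alpha+iT} F(s)\,\frac{(x+h)^s - x^s}{s}\,ds + O\bigl(\text{truncation error}\bigr),
\]
with $\alpha = \alpha(x,y)$, the Perron error being controlled by the assumption $h\geq x/y^{1/3}$, which is precisely the regime in which Hildebrand's theorem for $\Psi(x,y)$ itself operates.

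The main term is then treated by saddle-point analysis. On the line $\Re(s)=\alpha$ the integrand is concentrated in a window of width $\asymp 1/\sqrt{\log x \log y}$ around $t=0$: within that window the oscillatory factor $((x+h)^{\alpha+it}-x^{\alpha+it})/(\alpha+it)$ has magnitude $\asymp h\,x^{\alpha-1}$ and $|F(\alpha+it)|$ is roughly $F(\alpha)$, while outside it one uses uniform decay bounds for $F$ analogous to those appearing in the standard proof that $\Psi(x,y)\asymp x^{\alpha}\zeta(\alpha,y)/\sqrt{\log x\log y}$. Combining the contributions gives
\[
S(x,h) \ll \frac{h\,x^{\alpha-1}\,F(\alpha)}{\sqrt{\log x\log y}} = \frac{h\,x^{\alpha-1}\,\zeta(\alpha,y)}{\zeta(\alpha,y^\delta)\sqrt{\log x\log y}} \asymp \frac{h}{x}\cdot\frac{\Psi(x,y)}{\zeta(\alpha,y^\delta)},
\]
the last step using Lemma~\ref{l:smooth count in terms of saddle point}. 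The saving of $1/(\delta \log y)$ is then extracted from the denominator $\zeta(\alpha, y^\delta)$: Lemma~\ref{l:saddle point approx} gives $1-\alpha = O(\log\log x/\log y)$ throughout the range, so for every $p\leq y^\delta$ we have $p^{1-\alpha}=\exp(O(\delta\log\log x))\asymp 1$, and hence Mertens' theorem yields $\zeta(\alpha,y^\delta)\asymp\prod_{p\leq y^\delta}(1-1/p)^{-1}\asymp\delta\log y$.

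The main obstacle will be controlling the Perron integral \emph{away} from the saddle point, that is, establishing sufficiently strong uniform upper bounds on $|F(\alpha+it)|$ for moderate and large $|t|$, and keeping the Perron truncation error safely below the factor $1/(\delta\log y)$ that one is trying to save. Since $F(s)=\zeta(s,y)/\zeta(s,y^\delta)$ and the denominator is a short Euler product varying only mildly in $t$, these estimates essentially reduce to the corresponding tail and decay bounds for $\zeta(s,y)$ proved by Hildebrand~\cite{Hildebrand}; the lower bound $y\geq e^{(\log\log x)^{5/3+\varepsilon}}$ enters precisely as the range in which the requisite unconditional regularity estimates for $\zeta(s,y)$ are available.
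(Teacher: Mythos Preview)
Your approach is genuinely different from the paper's. The paper follows Hildebrand's \emph{elementary} method: it writes down the Buchstab-type identity
\[
\Psi_q(z,y)\log z \;=\; \sum_{\substack{p^m\leq z\\ y^\delta<p\leq y}}\Psi_q(z/p^m,y)\log p \;+\;\int_1^z\frac{\Psi_q(t,y)}{t}\,dt,
\]
takes the difference at $z=y^u$ and $z=y^u(1-h)$, normalises by defining $\Delta_h(\delta,y,u)$ via $\Psi_q(y^u,y)-\Psi_q(y^u(1-h),y)=\Delta_h(\delta,y,u)\cdot h y^u\rho(u)/(\delta\log y)$, and then derives a recursive inequality of the shape $\Delta_h^*(\delta,y,u)\leq\bigl(1+O(1/(u\log y))\bigr)\Delta_h^*(\delta,y,u-\tfrac12)$ which is iterated down to the base case $u\in[1/2,1]$, handled by a crude sieve. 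No contour integrals appear; the $1/(\delta\log y)$ is baked into the normalisation and the whole argument is a bootstrap on $u$. Your saddle-point route makes the saving $1/(\delta\log y)$ transparent through the factorisation $F(s)=\zeta(s,y)/\zeta(s,y^\delta)$, but in exchange you must control the Perron tails and the decay of $|\zeta(\alpha+it,y)|$ for $|t|$ up to a power of $y$; those bounds live in Hildebrand--Tenenbaum~\cite{HildTen86} (and its descendants), not in Hildebrand~\cite{Hildebrand}, which is precisely the elementary paper the appendix is imitating.

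Two small repairs to your sketch. First, the claim $p^{1-\alpha}=\exp(O(\delta\log\log x))\asymp 1$ for all $p\leq y^\delta$ is false once $\delta\log\log x$ is unbounded (and $\delta$ is allowed up to $1/10$); fortunately you only need the one-sided bound $\zeta(\alpha,y^\delta)\gg\delta\log y$, which is immediate from $\alpha<1$ and Mertens. Second, away from the saddle the ratio $|F(\alpha+it)|$ does not obviously inherit the decay of $|\zeta(\alpha+it,y)|$, since $|\zeta(\alpha+it,y^\delta)|^{-1}$ can be as large as $\prod_{p\leq y^\delta}(1+p^{-\alpha})\asymp\zeta(\alpha,y^\delta)$; you will need to argue that this loss of $\zeta(\alpha,y^\delta)^2\asymp(\delta\log y)^2$ in the tails is still swamped by the exponential decay of $|\zeta(\alpha+it,y)|/\zeta(\alpha,y)$ coming from~\cite{HildTen86}. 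Both points are fixable, but they are exactly the places where your outline stops being a proof.
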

As mentioned, the proof of this will follow similarly to Hildebrand~\cite[Theorem~3]{Hildebrand}, and we prove it in Appendix~\ref{s:appendix}. Here the lower bound on $y$ comes from the current best known error term in the prime number theorem. In the case of long intervals, this sum is asymptotically evaluated in~\cite[Théorème~2.1]{BretecheTenenbaum} and~\cite[Theorem~1]{Xuan}. Our result should incur no loss when $\delta \ll \frac{1}{\log u}$, seeing as then the smooth and rough conditions behave roughly as though they are independent of one another.  One can also find more general sums of this form in~\cite{Friedlander}.

\section{Euler product results}\label{Sec: Euler}

In this section, we collect useful results on partial Euler products. The main results of interest, Lemmas~\ref{l:exp result moderate y},~\ref{l:exp for small y}, and ~\ref{l:Euler product on half line}, give estimates for random Euler products which will later be key to the proofs of Theorems~\ref{Thm: main quantitative},~\ref{t: small y} and~\ref{t:small u upper bound}, respectively. The main feature of these results (different from most other work in the area) is that they study the partial Euler products off the line $\Re(s) = 1/2$. Despite this, we will find that our results always beat the ``trivial bound'' in certain cases. It may be helpful for the reader to observe that, for any $\sigma > 0$, we have
\begin{equation}\label{equ: trivial EP bound}
\E |F_y (\sigma / 2 + it)|^2 = \E \biggl| \sum_{\substack{P(n) \leq y}} \frac{f(n)}{n^{\sigma/2 + it}} \biggr|^2 = \zeta (\sigma, y),
\end{equation}
by expanding out the square and applying orthogonality. Note that we can exchange the order of summation and expectation due to the fact that our sums are absolutely bounded. Therefore, if we use Cauchy--Schwarz to bound the first absolute moment in equation~\eqref{equ:triangle}, this delivers a bound analogous to Rankin's trick (with an additional logarithmic factor from the Perron integral). We now prove a result that will allow us to directly evaluate the first moment of our Euler product integrals which we will use in the proof of Theorem~\ref{Thm: main quantitative}. In particular, so long as $\sigma$ isn't too small, this result will give a saving over the trivial bound for the first moment, since it shows that $\E |F_y (\sigma/2+it)|$ is significantly smaller than $\sqrt{\E|F_y (\sigma / 2 + it)|^2} = \zeta(\sigma, y)^{1/2}$.

\begin{lemma}[Main expectation result]\label{l:exp result moderate y}
Let $f$ be a Steinhaus random multiplicative function, let $\beta$ be a real number such that $|\beta| \leq 2$, say. Suppose also that $t \in \R$, and $0 < \sigma \leq 2$, then we have
\[
\E |F_y (\sigma / 2 + it)|^{\beta} = |\zeta (\sigma, y)|^{\beta^2/4} \exp \biggl( O \biggl( \sum_{p \leq y} \frac{1}{p^{3 \sigma / 2} | 1 - p^{-\sigma/2}|} \biggr) \biggr).
\]
where the implied constant is absolute.
\end{lemma}
\begin{remark}\label{r:EP transition range}
The error term in this result becomes very large when $\sigma$ is very small, in particular when $\sigma \ll \frac{1}{\log y}$, corresponding to inefficiency in the Taylor expansion of the logarithm. When one takes $\sigma = \alpha(x,y)$, this corresponds to an inefficiency in the range $y \ll \log x$. This is a manifestation of the change in the anatomy of smooth numbers, where, for $y \ll \log x$ a typical integer has prime factors that occur with high multiplicity.
\end{remark}
\begin{proof}
The result proceeds similarly to previous results for similar quantities, such as~\cite[Euler Product Result 1]{Harperlargevalue}. We first note that, for any $t \in \R$, we have
\[
\E |F_y (\sigma / 2 + it) |^{\beta} = \E |F_y (\sigma / 2) |^{\beta},
\]
which follows from translation invariance in law. Expanding out the definition of $F_y (\sigma / 2)$ and using the fact that the individual Euler factors are independent, we have
\[
\E |F_y (\sigma / 2) |^{\beta} = \prod_{p \leq y} \E \biggl| 1 - \frac{f(p)}{p^{\sigma/2}} \biggr|^{-\beta} ,
\]
In the ball $|z| < 1$, we have the truncated Taylor expansion $\log (1 - z) = - z - \frac{z^2}{2} + O \bigl( \frac{|z|^3}{1 - |z|} \bigr)$ for the principal branch of the logarithm. We stress that the implied constant is uniform for $|z| < 1$. Therefore the right-hand side of the previous display can be written as
\begin{multline*}
\exp \biggl( - \beta \Re \log \Bigl( 1 - f(p) / p^{\sigma /2} \Bigr) \biggr) = \\ 
\exp \biggl( \beta \Re \bigl( f(p) \bigr) / p^{\sigma / 2} + \beta \Re \bigl( f(p)^2 \bigr) / p^{\sigma} + O \Bigl( 1 /\bigl( p^{3 \sigma / 2} |1 - p^{- \sigma/2}| \bigr) \Bigr) \biggr).
\end{multline*}
by Taylor expansion of the logarithm. We now Taylor expand the first two terms in the exponential, noting that each of these terms is uniformly bounded. We deduce that
\begin{multline*}
\E \biggl| 1 - \frac{f(p)}{p^{\sigma/2}} \biggr|^{-\beta} = 
\E \biggl( 1 + \beta \Re \bigl( f(p) \bigr) / p^{\sigma / 2} + \\ \beta^2 \Re \bigl( f(p) \bigr)^2 / 2 p^{\sigma} + \beta \Re \bigl( f(p)^2 \bigr) / p^{\sigma} \bigr) \biggr) \exp \biggl( O \biggl( \frac{1}{p^{3 \sigma / 2} |1 - p^{- \sigma / 2} |} \biggr) \biggr) .
\end{multline*}
It follows by symmetry that $\E \Re \bigl( f(p) \bigr) = \E \Re \bigl( f(p)^2 \bigr) = 0$. Furthermore, we have $\E \Re \bigl( f(p) \bigr)^2 = \frac{1}{4} \E \bigl( f (p) + \overline{f(p)} \bigr)^2 = 1/2 $, so that the previous display is equal to 
\[
\biggl( 1 + \frac{\beta^2}{4 p^{\sigma}} \biggr) \exp \biggl( O \biggl( \frac{1}{p^{3 \sigma / 2} | 1 - p^{-\sigma/2}|} \biggr) \biggr) = \exp \biggl( \frac{\beta^2}{4p^{\sigma}} + O \biggl( \frac{1}{p^{3 \sigma / 2} | 1 - p^{-\sigma/2}|} \biggr).
\]
Importantly, the implicit constants in the ``big Oh'' term do not need to depend on $\beta$, $\sigma$, or $p$. We deduce that
\[
\E |F_y (\sigma / 2 + it) |^{\beta} = \exp \biggl( \frac{\beta^2}{4} \sum_{p \leq y} \frac{1}{p^{\sigma}} + O \biggl( \sum_{p \leq y} \frac{1}{p^{3 \sigma / 2} | 1 - p^{-\sigma/2}|} \biggr) \biggr) .
\]
Since it follows from Taylor expansion (similarly to the above) that $\log |\zeta (\sigma, y)| = \sum_{p \leq y} 1/p^{\sigma} + O \bigl( \sum_{p \leq y}\frac{1}{p^{2 \sigma} |1 - p^{-\sigma}|} \bigr)$, and noting that this error term is dominated by the one above, the statement of the lemma follows. 
\end{proof}

As mentioned in Remark~\ref{r:EP transition range}, the previous result becomes inefficient when $\sigma \ll \frac{1}{\log y}$, meaning that we cannot use it to analyse our sum over $y$-smooth numbers for $y \leq C \log x$ (where $C>0$ is a large constant). Instead, in this range of small $y$ (specifically, in the proof of Theorem~\ref{t: small y}), we will employ the following result for the first moment:

\begin{lemma}[Expectation result close to imaginary axis]\label{l:exp for small y}
Let $f$ be a Steinhaus random multiplicative function. There exists a fixed $\ee>0$ such that the following holds. Suppose that $t \in \R$ and $0 < \sigma \leq \frac{\varepsilon}{\log y}$. Then we have
\[
\E |F_y (\sigma / 2 + it)| \leq \prod_{p \leq y} \biggl( \frac{1}{3}\log\Big(\frac{1}{1-p^{-\sigma/2}}\Big) \biggr).
\]
\end{lemma}
\begin{proof}
Again, we make use of translation invariance to note that $\E_y |F_y (\sigma + it)| = \E |F_y (\sigma/2)|$. Then using independence and the fact that each $f(p)$ is uniformly distributed on the complex unit circle, we have
\[
\E |F_y (\sigma / 2)| = \prod_{p \leq y} \E \bigl| 1 - f(p) p^{- \sigma/2} \bigr|^{-1} = \prod_{p \leq y} \int_{0}^{1} \frac{d \theta}{| 1 - p^{- \sigma/2} e^{2 \pi i \theta}|}.
\]
We will show that there exists a constant $0< \eta< 1$ such that for $\eta < c < 1$, we have 
\begin{equation}\label{eqn: I}
   I: = 
\int_{0}^{1} \frac{d \theta}{| 1 - c e^{2 \pi i \theta}|} \leq \frac{1}{3} \log \biggl( \frac{1}{1-c} \biggr) .
\end{equation} Note that the integral $I$ can be rewritten as, after change of variable $m = \frac{4c}{(1+c)^{2}}$,
$I  = \frac{2}{\pi (1+c) } \cdot K(m)$, 
where $K(m) = \int_{0}^{\pi/2} \frac{d\theta}{\sqrt{1-m\sin^{2}\theta}}$ is the so called complete elliptic integral of the first kind. In particular, it is known and easy to see that as $m \to 1^{-}$ it has asymptotic $K(m)\sim \log (\frac{4}{\sqrt{1-m}})$ (see, e.g. \cite{Elliptic}).
 It follows that as $c \to 1^{-}$,  
\[I = \frac{1}{\pi } \log (\frac{1}{1-c}) + \frac{3\log 2}{\pi} + o(1).  \]
Thus, by choosing $\eta< c<1$ close enough to 1, one can see \eqref{eqn: I} holds.   
By setting $e^{-\ee/2} := \eta$, we apply \eqref{eqn: I} with $c= p^{-\sigma/2} = e^{-\sigma \log p /2}$ where $\sigma = \ee/\log y$ and this concludes the proof. 
\end{proof}

We now state the final Euler product lemma in this section, which makes use of critical GMC phenomena. This will allow us to obtain cancellation in random Euler product integrals on the line $\Re (s) = \alpha/2$ provided that the truncation length is small enough in terms of $\alpha$, which forms the main ingredient in the proof of Theorem~\ref{t:small u upper bound}.
\begin{lemma}[Expectation result close to half line]\label{l:Euler product on half line}
Let $f$ be a Steinhaus random multiplicative function and let $c = 2e^{-2}$. For $4/5 \leq \alpha < 1$, we define $z = e^\frac{c}{1 - \alpha}$, so that $F_{z} (s) = \prod_{p \leq e^{\frac{c}{1 - \alpha}}} \bigl( 1 - \frac{f(p)}{p^{s}} \bigr)^{-1}$. Then we have
\[
\E \biggl( \int_{-1/2}^{1/2} \Bigl| F_{z} \Bigl( \frac{\alpha}{2} + it \Bigr) \Bigr|^2 \, dt \biggr)^{2/3} \ll \Biggl( \frac{1}{(1 - \alpha) \sqrt{\log \frac{1}{1 - \alpha}}} \Biggr)^{2/3} .
\]
\end{lemma}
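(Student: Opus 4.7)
The plan is to recognise this statement as a direct consequence of Harper's critical Gaussian multiplicative chaos estimates in~\cite{HarperLow}, after noting that the line of evaluation $\Re(s) = \alpha/2$ is, by design, only $O(1/\log z)$ to the left of the critical line. The key algebraic observation is that since $\log z = c/(1-\alpha)$ with $c = 2e^{-2}$, we have
\[
\frac{\alpha}{2} = \frac{1}{2} - \frac{1-\alpha}{2} = \frac{1}{2} - \frac{e^{-2}}{\log z},
\]
so $F_z(\alpha/2 + it) = F_z \bigl( \tfrac{1}{2} - \tfrac{e^{-2}}{\log z} + it \bigr)$. The specific value $c = 2e^{-2}$ is chosen precisely so that this shifted argument matches the shifted Euler product appearing in~\cite{HarperLow} (as foreshadowed by the footnote in the introduction). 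On this truncated range, $\log |F_z(\alpha/2 + it)|$ still has variance of order $\log \log z$ and logarithmically correlated covariance at scale $1/\log z$, placing us squarely in the critical GMC regime.

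Given the shift identity, I would then invoke Harper's critical GMC Euler product bound, which for $0 < q < 1$ delivers
\[
\E \biggl( \int_{-1/2}^{1/2} \Bigl| F_z \Bigl( \frac{1}{2} - \frac{e^{-2}}{\log z} + it \Bigr) \Bigr|^2 dt \biggr)^q \ll_q \biggl( \frac{\log z}{\sqrt{\log \log z}} \biggr)^q,
\]
and apply it at $q = 2/3$. Substituting $\log z = c/(1-\alpha)$ and using $\log \log z \asymp \log \bigl( 1/(1-\alpha) \bigr)$ (valid since we are in the regime where $1-\alpha$ is small) then produces the right-hand side in the statement.

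The main obstacle I anticipate is that~\cite{HarperLow} is written with the $1/2$ moment as its focal point, because that is what Harper's first-moment theorem for $\sum_{n\le x} f(n)$ requires. The $2/3$ moment bound is nonetheless implicit in his framework: his truncated second moment estimate, together with the ballot/barrier argument, controls the tail of $Y := \int_{-1/2}^{1/2} |F_z(\tfrac12 - \tfrac{e^{-2}}{\log z} + it)|^2 \, dt$ sharply enough that the layer-cake identity $\E Y^q = q \int_0^\infty \lambda^{q-1} \P(Y > \lambda) \, d\lambda$ yields the sharp rate $(\log z / \sqrt{\log \log z})^q$ at every $q \in (0,1)$. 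I flag that a cheap interpolation between Harper's $1/2$ moment bound and the trivial first-moment bound $\E Y \asymp \log z$ would lose a factor of $(\log \log z)^{1/6}$, so the cleaner plan is to replay the relevant portion of Harper's argument at exponent $q = 2/3$; once this moment bound is in hand, the remainder of the proof is routine substitution.
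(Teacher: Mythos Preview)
Your approach matches the paper's: both observe the shift identity $\alpha/2 = 1/2 - e^{-2}/\log z$ and then invoke Harper's critical GMC Euler product bound from~\cite{HarperLow} at exponent $q=2/3$. Your worry about the availability of the $2/3$ moment is unnecessary --- the paper cites the section of~\cite{HarperLow} titled ``Proof of the upper bound in Theorem~1, assuming Key Propositions~1 and~2'', where the estimate is already stated uniformly for all $2/3 \le q \le 1 - 1/\sqrt{\log\log x}$, so one can quote it directly (with $k=1$ and $x$ chosen so that $x^{e^{-2}}=z$) without replaying the barrier argument or resorting to interpolation.
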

\begin{proof}
Let $x \geq 10$, and define $F_{x, k} (s) \coloneqq \prod_{p \leq x^{e^{-(k+1)}}} \bigl( 1 - \frac{f(p)}{p^s} \bigr)^{-1}$. We then have
\[
\E \Biggl( \frac{e^{k} (1-q) \sqrt{\log \log x}}{\log x} \int_{-1/2}^{1/2} \biggl| F_{x, k} \biggl( 1/2  - \frac{k}{\log x} + it \biggr) \biggr|^2 \, dt \Biggr)^q \ll 1 ,
\]
uniformly for all $0 \leq k \leq \lfloor \log \log x \rfloor$ and $2/3 \leq q \leq 1 - \frac{1}{\sqrt{\log \log x}}$. This is proved by Harper~\cite{HarperLow} in the section titled: ``Proof of the upper bound in Theorem 1, assuming Key Propositions 1 and 2''. To obtain the lemma, we apply this result with $k=1$, $x=e^{\frac{1}{1 - \alpha}}$, and $q = 2/3$. Since
\[
    \E \biggl( \int_{-1/2}^{1/2} \Bigl| F_{z} \Bigl( \frac{\alpha}{2} + it \Bigr) \Bigr|^2 \, dt \biggr)^{2/3} = \E \biggl( \int_{-1/2}^{1/2} \Bigl| F_{x, 1} \Bigl( 1/2 - \frac{1}{\log x} + it \Bigr) \Bigr|^2 \, dt \biggr)^{2/3},
\]
the result follows.
\end{proof}

Finally, we introduce a version of Parseval's identity, now ubiquitous in the study of random multiplicative functions. This will be used in conjunction with the above lemma to prove Theorem~\ref{t:small u upper bound}.
\begin{lemma}[Multiplicative Parseval identity]\label{l:ha result}
Let $(a_n)_{n=1}^\infty$ be a sequence of complex numbers, and let $A(s) = \sum_{n=1}^\infty \frac{a_n}{n^s}$ denote the corresponding Dirichlet series, and $\sigma_c$ the abscissa of convergence. Then for any $\sigma > \max\{ 0, \sigma_c \}$, we have \[ \int_0^\infty \frac{|\sum_{1\leq n \leq x} a_n|^2}{x^{1 + 2 \sigma}} \, dx \, = \frac{1}{2\pi} \int_{-\infty}^{\infty} \biggl| \frac{A(\sigma + i t)}{\sigma + it} \biggr|^2 \, dt \, . \]
\end{lemma} 
\begin{proof}
This is \cite[Equation~(5.26)]{MV2007}.
\end{proof}

\section{Moderately smooth numbers: Proof of Theorem~\ref{Thm: main quantitative}}\label{sec: moderate}

In this section, we perform the rigorous analysis outlined in Section~\ref{s:outline of moderate y}, the main input being Lemma~\ref{l:exp result moderate y}, in addition to classical results for smooth numbers found in Section~\ref{Sec: smooth}.

\begin{proof}[Proof of Theorem~\ref{Thm: main quantitative}]

To begin, suppose that $1000 \log x \leq y \leq x$. By Perron's formula~\cite[Corollary~5.3]{MV2007}, we have
\[
\sum_{\substack{n \leq x \\ P(n) \leq y}} f(n) = \frac{1}{2 \pi i} \int_{1 - i x}^{1 + ix} F_y (s) x^s \frac{ds}{s} + O \Biggl( 1 + \sum_{\substack{x/2 < n \leq x \\ n \neq x}} \min \biggl\{ 1, \frac{1}{|x-n|} \biggr\} + \prod_{p \leq y} \biggl( 1 - \frac{1}{p} \biggr)^{-1} \Biggr).
\]
It follows from straightforward calculations (using Mertens' Theorem to bound the Euler product) that the error term here is $\ll \log x$. 

Since the integrand is analytic and has no poles apart from $s=0$, we can shift the integral to the line $\Re(s) = \alpha(x^2, y)/2$. Specifically, we have
\begin{multline*}
\frac{1}{2 \pi i} \int_{1 - i x}^{1 + ix} F_y (s) x^s \frac{ds}{s} = \frac{1}{2 \pi i} \int_{\alpha (x^2, y) - ix}^{\alpha (x^2, y) + ix} F_y (s) x^s \frac{ds}{s} \\  
+ O \biggl( \frac{1}{x} \int_{\alpha (x^2, y)/2}^1 \bigl( |F_y (\sigma / 2 + ix)| + |F_y (\sigma / 2 - ix)| \bigr) x^{\sigma} \, d \sigma \biggr) .
\end{multline*}
For the remainder of this section, we let $\alpha \coloneqq \alpha (x^2, y)$. Not carefully that, in the following sections, we will instead take $\alpha$ to be the typical saddle point corresponding to $x$ and $y$. By the triangle inequality, it follows from the previous equations that 
\begin{equation}\label{eqn: whole}
   \begin{split}
       \E \Bigl| \sum_{\substack{n \leq x \\ P(n) \leq y}} f(n) \Bigr| \ll  x^{\alpha(x^2, y)/2} \int_{-x}^{x} \E \biggl| \frac{F_y (\alpha / 2 + it)}{\alpha / 2 + it} \biggr| \, dt + \log x \\ 
+ \frac{1}{x} \int_{\alpha/2}^1 \bigl( \E |F_y (\sigma / 2 + ix)| + \E |F_y (\sigma / 2 - ix)| \bigr) x^{\sigma} \, d \sigma. 
   \end{split} 
\end{equation}
We now apply Lemma~\ref{l:exp result moderate y} to each expectation here. For the last term, we note that the bound is maximised on our range of integration when $\sigma = \alpha/2$, so that we have $\E |F_y (\sigma / 2 + it)| \ll \zeta(\alpha, y)^{1/4} \exp \bigl( O \bigl( \sum_{p \leq y} \frac{1}{p^{3 \alpha / 2} | 1 - p^{-\alpha /2}|} \bigr) \bigr)$. Then using the fact that $\int_{-x}^{x} \frac{dt}{|\alpha/2 + it|} \ll \log x$ uniformly on our range of $\alpha = \alpha (x^2, y)$ (see Lemma~\ref{l:saddle point approx}), we find that the first term dominates the last term, and putting this all together, we deduce that
\begin{equation}\label{equ: moderate y before triangle}
\E \Bigl| \sum_{\substack{n \leq x \\ P(n) \leq y}} f(n) \Bigr| \ll x^{\alpha (x^2, y)/2} \zeta \bigl( \alpha (x^2, y), y \bigr)^{1/4} \exp \biggl( O \biggl( \sum_{p \leq y} \frac{1}{p^{3 \alpha / 2} |1 - p^{-\alpha / 2}|} \biggr) \biggr) (\log x).
\end{equation}
Here we have also used the fact that our bound certainly exceeds $\log x$, so the first term dominates the right-hand side of \eqref{eqn: whole}.

We proceed to bound the prime number sum in the exponent. For $\alpha \log p \leq 1$, we have the lower bound $|1 - p^{-\alpha / 2}| \gg \alpha \log p$, and for $\alpha \log p \geq 1$, we have $|1 - p^{-\alpha / 2}| \gg 1$. When $x$ is sufficiently large, we certainly have $\alpha (x^2, y) \log y \geq 1$ for $y \geq 1000 \log x$ by Lemma~\ref{l:saddle point approx uniform}, so we deduce that
\begin{equation}\label{eqn: primesum}
\sum_{p \leq y} \frac{1}{p^{3 \alpha / 2} |1 - p^{-\alpha / 2}|} \ll \sum_{p \leq e^{1/\alpha}} \frac{1}{\alpha p^{3 \alpha / 2} \log p} + \sum_{e^{1 / \alpha} < p \leq y} \frac{1}{p^{3 \alpha / 2}} . 
\end{equation}
Suppose now that $y \geq (\log x)^{4}$, then we certainly have $\alpha (x^2, y) \geq 7/10$, say, hence $3 \alpha (x^2, y) / 2 \geq 1.05$. It then follows from the above that
\[
\sum_{p \leq y} \frac{1}{p^{3 \alpha / 2} |1 - p^{-\alpha / 2}|} \ll 1
\]
Now suppose that $1000 \log x \leq y \leq (\log x)^4$. Then we know by Lemma~\ref{l:saddle point approx uniform} that $\frac{1}{\log y} \leq \alpha (x^2, y) \leq 4/5$ when $x$ is sufficiently large, and it follows from dyadic decomposition that
\[
\sum_{p \leq e^{1/\alpha}} \frac{1}{\alpha p^{3 \alpha / 2} \log p} \ll 1 + \frac{1}{\alpha} \sum_{1 \leq i \leq 1/\alpha} \frac{e^{i(1 - 3 \alpha / 2)}}{i^2} \ll e^{1/\alpha}  ,
\]
say. In this range of $y$, i.e., $1000 \log x \leq y \leq (\log x)^4$, we also have
\[
\sum_{e^{1 / \alpha} < p \leq y} \frac{1}{p^{3 \alpha / 2}} \ll 1 + \frac{y^{1 - 3 \alpha / 2}}{\log y} \]
 Hence in view of \eqref{eqn: primesum} and combining the above estimates, we deduce that
\[
\sum_{p \leq y} \frac{1}{p^{3 \alpha / 2} |1 - p^{-\alpha / 2}|} \ll e^{1/\alpha} + \frac{y^{1 - 3 \alpha / 2}}{\log y} ,
\]
uniformly for all $1000 \log x \leq y \leq x$. Inserting this estimate into~\eqref{equ: moderate y before triangle}, we have
\[
\E \Bigl| \sum_{\substack{n \leq x \\ P(n) \leq y}} f(n) \Bigr| \ll x^{\alpha (x^2, y) / 2} \zeta \bigl( \alpha (x^2, y), y \bigr)^{1/4} \exp \biggl( O \biggl( e^{1/\alpha} + \frac{y^{1 - 3 \alpha / 2}}{\log y} \biggr) \biggr) (\log x) .
\]
Now applying equation~\eqref{equ:rankin to true count comparison} (which follows from Lemmas~\ref{l:smooth count in terms of saddle point} and~\ref{l:saddle point approx}), we have
\begin{equation}\label{equ: quant bound in terms of Psi (x^2, y)}
\E \Bigl| \sum_{\substack{n \leq x \\ P(n) \leq y}} f(n) \Bigr| \ll \Psi(x^2, y)^{1/4} \exp \biggl( O \biggl( e^{1/\alpha} + \frac{y^{1 - 3 \alpha / 2}}{\log y} \biggr) \biggr) (\log x)^{9/8} (\log y)^{1/8}.
\end{equation}
We now bound the error terms in the exponential more explicitly. In particular, we will show that
\[
e^{1/\alpha} + \frac{y^{1 - 3 \alpha / 2}}{\log y} \ll 1 + u \biggl( \frac{\log x}{y} \biggr)^{1/3}. 
\]
For this, we will make use of Lemma~\ref{l:saddle point approx uniform}, which (for ease of reference) tells us that
\begin{equation}\label{equ: saddle point uniform}
\alpha (x,y) = \frac{\log (1 + y/\log x)}{\log y} \biggl( 1 + O \biggl( \frac{\log \log (1+y)}{\log y} \biggr) \biggr)
\end{equation}
uniformly for $2 \leq y \leq x$. Note that we wish to apply this with $\alpha = \alpha(x^2,y)$, and in particular it implies that $\alpha (x^2, y) \geq \frac{8 \log (1 + y / 2 \log x)}{9 \log y}$, say, so for $1000 \log x \leq y \leq x$, we have
\[
y^{-3\alpha / 2} \ll \exp \biggl( - \frac{4}{3} \log \biggl(1 + \frac{y}{2\log x}\biggr) \biggr) \ll \biggl( \frac{\log x}{y} \biggr)^{4/3}.
\]
From this it follows that
\[
\frac{y^{1 - 3 \alpha / 2}}{\log y} \ll u \biggl( \frac{\log x}{y} \biggr)^{1/3}.
\]
Furthermore, by~\eqref{equ: saddle point uniform}, we have
\[
e^{1/\alpha} \ll \exp \biggl( \frac{2 \log y}{\log (1 + y/2\log x)} \biggr).
\]
In the range $y \geq (\log x)^2$, say, this is certainly $\ll 1$, and for $1000 \log x \leq y \leq (\log x)^2$, this satisfies the bound $\ll y^{3/10}$. These estimates together leads to that the bound
\[
e^{1/\alpha} \ll 1 +  u \biggl( \frac{\log x}{y} \biggr)^{1/3},
\]
holds uniformly for $1000 \log x \leq y \leq x$. Now inserting these estimates into~\eqref{equ: quant bound in terms of Psi (x^2, y)}, we have 
\begin{equation}\label{equ: L1 norm in terms of psi(x^2, y) bound}
\E \Bigl| \sum_{\substack{n \leq x \\ P(n) \leq y}} f(n) \Bigr| \ll \Psi(x^2, y)^{1/4} \exp \biggl( O \biggl( u \Bigl( \frac{\log x}{y} \Bigr)^{1/3} \biggr) \biggr) (\log x)^{9/8} (\log y)^{1/8},
\end{equation}
This proves the second formulation in Theorem~\ref{Thm: main quantitative} on the range $1000 \log x \leq y \leq x$, though as we will see, the result is only non-trivial in a more restrictive range of $y$, so it is not stated for this range in the theorem. To elucidate our saving, we now undo the application of Lemma~\ref{l:smooth count in terms of saddle point} so that we can compare the right-hand side to the trivial bound, $\sqrt{\Psi(x,y)}$. In particular, we will show that in the range $1000 \log x \leq y \leq x^{\frac{1}{\log \log x}}$, we have
\[
\Psi (x^2, y) \ll \frac{\Psi(x,y)^2}{\log y} \exp \biggl( - u \biggl( 2 \log 2 + O \biggl( \Bigl( \frac{\log x}{y} \Bigr)^{1/3} + \frac{\log \log u}{\log u} \biggr) \biggr) \biggr),
\]
and from this we will deduce~\eqref{equ: alt statement}. First of all, by Lemma~\ref{l:smooth count in terms of saddle point}, we have
\begin{equation}\label{equ: order of mag comparrison}
\Psi (x^2, y) \asymp \Psi (x,y)^2 x^{2 (\alpha (x^2, y) - \alpha (x,y))} \frac{\zeta \bigl( \alpha (x^2, y), y \bigr)}{\zeta \bigl( \alpha (x,y), y \bigr)^2} \frac{\alpha(x,y)^2}{\alpha(x^2, y)} \sqrt{\log x \log y}.
\end{equation}
We begin by considering the $x^{2 (\alpha (x^2, y) - \alpha (x,y))}$ term. Applying the second part of Lemma~\ref{l:saddle point approx}, we have
\begin{equation}\label{equ:alpha(x^2,y) vs alpha(x,y)}
\begin{split}
\alpha (x^2, y) - \alpha (x,y) &= \frac{\log (\frac{u \log u}{2 u \log 2u})}{\log y} + O \biggl( \frac{\log \log u}{(\log y)(\log u)} \biggr) \\
& = - \frac{\log 2}{\log y} + O \biggl( \frac{\log \log u}{(\log y)(\log u)} \biggr) .
\end{split}
\end{equation}
on the range $1000 \log x \leq y \leq x^{\frac{100}{\log \log x}}$, say. This is precisely where our saving of the form $\exp (- u \log 2 / 2)$ comes from. We now move on to bounding the quotient of zeta functions. First of all, we note that for any $\sigma > 0$, we have
\[
\log \zeta (\sigma, y) = \sum_{p \leq y} \frac{1}{p^{\sigma}} + O \biggl( \sum_{p \leq y} \frac{1}{p^{2 \sigma} |1 - p^{-\sigma}|} \biggr).
\]
This follows from the fact that $\log (1 - x) = - x + O \bigl( \frac{|x|^2}{1 - |x|} \bigr) $ uniformly for $|x| < 1$. For both $\sigma = \alpha (x,y)$ and $\sigma = \alpha (x^2, y)$, the handling of the error term is identical to the similar term that appeared previously, specifically the error term in the exponential in equation~\eqref{equ: moderate y before triangle}. By following identically the handling of that term (or by noting that the terms in our case are slightly smaller), we can deduce the same bound here, so that 
\[
\frac{\zeta \bigl( \alpha (x^2, y), y \bigr)}{\zeta \bigl( \alpha (x,y), y \bigr)^2} = \exp \biggl( \sum_{p \leq y} \frac{1}{p^{\alpha(x^2, y)}} - 2 \sum_{p \leq y} \frac{1}{p^{\alpha (x,y)}} + O \biggl(1 +  u \Bigl( \frac{\log x}{y} \Bigr)^{1/3} \biggr) \biggr).
\]
We now bound the prime number sums, which we evaluate using~\cite[Lemma~7.4]{MV2007} to find that
\[
\sum_{p \leq y} p^{-\alpha (x, y)} = \frac{y^{1 - \alpha(x,y)}}{(1 - \alpha(x,y)) \log y} +  \log \Bigl( \frac{1}{1 - \alpha(x,y)} \Bigr) + O \biggl( \frac{y^{1 - \alpha(x,y)}}{(1 - \alpha(x,y))^2 (\log y)^2} \biggr).
\]
Applying Lemma~\ref{l:saddle point approx}, we obtain
\[
\sum_{p \leq y} p^{-\alpha (x, y)} = u + \log \Bigl( \frac{1}{1 - \alpha(x,y)} \Bigr) + O \biggl( \frac{u \log \log u}{\log u} + \frac{u \log x}{y}\biggr),
\]
uniformly for $2 \log x \leq y \leq x^{\frac{100}{\log \log x}}$, say. Similarly, in the same range, we have 
\[
\sum_{p \leq y} p^{-\alpha (x^2 , y)} = 2u + 2 \log \Bigl( \frac{1}{1 - \alpha(x^2 ,y)} \Bigr) + O \biggl( \frac{u \log \log u}{\log u} + \frac{u \log x}{y}\biggr).
\]
Therefore, we obtain the bound
\[
\frac{\zeta \bigl( \alpha (x^2, y), y \bigr)}{\zeta \bigl( \alpha (x,y), y \bigr)^2} = \Biggl( \frac{\bigl( 1 - \alpha (x,y) \bigr)^2}{1 - \alpha (x^2 ,y)} \Biggr)  \exp \biggl( O \biggl( u \Bigl( \frac{\log x}{y} \Bigr)^{1/3} + \frac{u \log \log u}{\log u} \biggr) \biggr).
\]
Combining this result with~\eqref{equ: order of mag comparrison} and~\eqref{equ:alpha(x^2,y) vs alpha(x,y)}, we deduce that
\begin{multline*}
\Psi (x^2, y) = \Psi(x,y)^2 \exp \biggl( - u \biggl( 2 \log 2 + O \biggl( \Bigl( \frac{\log x}{y} \Bigr)^{1/3} + \frac{\log \log u}{\log u} \biggr) \biggr) \biggr) \\ \times \biggl( \frac{\alpha(x,y)^2}{\alpha(x^2, y)} \biggr) \Biggl( \frac{\bigl( 1 - \alpha (x,y) \bigr)^2}{1 - \alpha (x^2 ,y)} \Biggr) \sqrt{\log x \log y}.
\end{multline*}
Finally, we note that on the range $2 \log x \leq y \leq x^{\frac{100}{\log \log x}}$, say, Lemma~\ref{l:saddle point approx} implies that $\frac{\alpha(x,y)^2}{\alpha(x^2, y)} \ll \frac{1}{\log y}$, and by Lemma~\ref{l:saddle point approx} we obtain the bound $\bigl( \frac{( 1 - \alpha (x,y) )^2}{1 - \alpha (x^2 ,y)} \bigr) \sqrt{\log x \log y} \ll \sqrt{u} \log u$, which can be engulfed into the error in the exponential. Therefore, as desired, we have
\[
\Psi (x^2, y) \ll \frac{\Psi(x,y)^2}{\log y} \exp \biggl( - u \biggl( 2 \log 2 + O \biggl( \Bigl( \frac{\log x}{y} \Bigr)^{1/3} + \frac{\log \log u}{\log u} \biggr) \biggr) \biggr),
\]
which holds uniformly for $1000 \log x \leq y \leq x^{\frac{1}{\log \log x}}$, say. Inserting this estimate into equation~\eqref{equ: L1 norm in terms of psi(x^2, y) bound} and using the fact that $(\log x)^{9/4} (\log y)^{-1/4} \ll u \log x$, we have
\begin{equation}\label{equ: comp to trivial bound}
\E \Bigl| \sum_{\substack{n \leq x \\ P(n) \leq y}} f(n) \Bigr| \ll \\ 
\sqrt{\Psi(x,y)\exp \Bigl(-u \biggl( \log 2 -  O \biggl(\Bigl( \frac{\log x}{y} \Bigr)^{1/3} + \frac{\log \log u}{\log u} \biggr) \biggr) \biggr) (\log x)^2}.
\end{equation}
uniformly for $1000 \log x \leq y \leq x^{\frac{1}{\log \log x}}$. Now, in the range $(\log x)^{1 + \varepsilon} \leq y \leq x^{\frac{1}{(\log \log x)^{1 + \varepsilon}}}$, say, we have $\frac{\log x}{y} = o(1)$ and $\log \log x = o(u)$, where the $o(1)$ terms goes to zero uniformly for fixed $\varepsilon > 0$. Therefore, equation~\eqref{equ: alt statement} holds on this range of $y$, as required. It is also clear from this inequality that we have better than squareroot cancellation (for smaller $y$) whenever $y \geq C \log x$ where $C \geq 1000$ is a large constant, and for $y$ closer to $x$ we have better than squareroot cancellation whenever $u > \frac{2}{\log 2} \log \log x \approx 2.885 \log \log x$ to overcome the $(\log x)^{2}$ term. Therefore, for some large constant $C \geq 1000$, the range $C \log x \leq y \leq x^{\frac{1}{3\log \log x}}$ is such that we always have a saving of at least $e^{cu}$ for some fixed $c > 0$ on this range. This completes the proof of Theorem~\ref{Thm: main quantitative}.
\end{proof}

\section{Large smoothness parameter: Proof of Theorem~\ref{t:small u upper bound}}\label{Sec: GMC}

In this section, we prove Theorem~\ref{t:small u upper bound}. As mentioned in Section~\ref{s: large y explained}, the proof essentially follows that of Harper~\cite{HarperLow}, but in our case, we only condition on relatively small primes. Since the remaining random Euler product behaves as though it is on the half-line, we are in a similar setting to Harper~\cite{HarperLow}, and we can apply Lemma~\ref{l:Euler product on half line} to obtain a non-trivial saving. The additional input needed for this strategy is an estimate for the count of smooth numbers in short intervals without small prime factors, which is provided by Lemma~\ref{l: smooths in short intervals estimate}.

The theorem will follow by iteratively applying the following proposition:
\begin{proposition}\label{p:small u upper bound}
Let $v>2$ be a large constant so that $\alpha (x,y) \leq 1 - \frac{10}{\log y}$ whenever $y \leq x^{1/v}$ (it can be verified using Lemma~\ref{l:saddle point approx} that such a constant exists), and let $c = 2 e^{-2}$.  For any $e^{(\log \log x)^{2}} \leq y \leq x^{1/v}$, we have
\[
\E \biggl| \sum_{\substack{n \leq x \\ e^{\frac{c}{1-\alpha}} < P(n) \leq y}} f(n) \biggr| \ll \frac{\Psi (x,y)^{1/2}}{\Bigl(\log \frac{1}{1-\alpha(x,y)} \Bigr)^{1/4}},
\]
where the implied constant is uniform in $y$. 
\end{proposition}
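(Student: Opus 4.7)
The plan is to adapt Harper's critical-GMC framework from~\cite{HarperLow} to our shifted (off-critical-line) setting. Set $z = e^{c/(1-\alpha)}$ with $c = 2e^{-2}$, matching the constant in Lemma~\ref{l:Euler product on half line}, and decompose each $n$ contributing to the sum uniquely as $n = m\ell$ with $m > 1$ having all prime factors in $(z, y]$ and $\ell$ being $z$-smooth. The associated Dirichlet series then factorizes as $A(s) = \tilde M(s) F_z(s)$ with $\tilde M(s) \defeq \prod_{z < p \leq y}(1 - f(p)/p^s)^{-1} - 1$, and crucially $\tilde M$ and $F_z$ are independent random variables since they depend on disjoint prime ranges.

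First I would apply a smoothed Perron/Plancherel reduction, averaging the partial sum over a short window $[x, x+h]$ with $h$ of size $x/y^{1/3}$ so that Lemma~\ref{l: smooths in short intervals estimate} bounds the boundary error. Invoking Plancherel's identity (Lemma~\ref{l:ha result}) at the line $\Re s = \alpha/2$ then reduces the problem to showing that
\[
\E\Big( \int_{-T}^{T} \frac{|\tilde M(\alpha/2 + it) F_z(\alpha/2 + it)|^2}{|\alpha/2 + it|^2 + 1}\, dt \Big)^{1/2} \ll \frac{\zeta(\alpha, y)^{1/2}}{(\log 1/(1 - \alpha))^{1/4}}
\]
for $T$ of size $O(1)$; combining this with the Plancherel prefactor (of size $x^{\alpha/2}/(\log x \log y)^{1/4}$) and Lemma~\ref{l:smooth count in terms of saddle point} recovers the target $\Psi(x,y)^{1/2}/(\log 1/(1 - \alpha))^{1/4}$.

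To extract the critical-chaos saving, I would condition on the large-prime variables $(f(p))_{p > z}$, so that $|\tilde M(\alpha/2 + it)|^2$ becomes a deterministic weight, and then separate the $\tilde M$ and $F_z$ contributions via a discretization at the decorrelation scale $1/\log y$ of $\tilde M$ combined with a careful Cauchy--Schwarz/Hölder split. On the $\tilde M$ side, Lemma~\ref{l:standardEPresult} yields $\E|\tilde M(\alpha/2 + it)|^2 \asymp \zeta(\alpha, y)/\zeta(\alpha, z) \asymp (1 - \alpha)\zeta(\alpha, y)$; on the $F_z$ side, Lemma~\ref{l:Euler product on half line} together with Jensen's inequality applied to $x \mapsto x^{3/4}$ gives
\[
\E\Big(\int_{-1/2}^{1/2} |F_z(\alpha/2 + it)|^2 dt\Big)^{1/2} \leq \Big(\E\Big(\int|F_z|^2 dt\Big)^{2/3}\Big)^{3/4} \ll \frac{1}{((1 - \alpha) \sqrt{\log 1/(1 - \alpha)})^{1/2}} ,
\]
which is exactly where the $(\log 1/(1 - \alpha))^{1/4}$ saving enters. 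Combined with $\zeta(\alpha, z) \asymp 1/(1 - \alpha)$, this produces the desired upper bound for the Euler-product expectation.

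The principal technical obstacle will be the separation step: naive Cauchy--Schwarz, i.e.\ $\E(\int |\tilde M F_z|^2)^{1/2} \leq (\E \int |\tilde M F_z|^2)^{1/2}$, loses the critical-chaos saving entirely, while simple discretization with subadditivity of $x \mapsto x^{1/2}$ introduces a spurious factor of $(\log y)^{1/2}$. A careful argument is therefore needed to split the expectations so that only the second moment of $\tilde M$ (rather than its supremum) and the first moment over the $F_z$ integral are invoked, perhaps by iterating Cauchy--Schwarz on conditional expectations or by balancing Hölder exponents against the moments supplied by Lemma~\ref{l:standardEPresult}. Once this separation is in place, rewriting the final bound into the form stated in the proposition follows via Lemma~\ref{l:smooth count in terms of saddle point}.
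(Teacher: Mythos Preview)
Your high-level plan is correct: decompose $n = m\ell$, exploit independence of the two prime ranges, and feed the small-prime part into Lemma~\ref{l:Euler product on half line}. But the order of operations you propose creates exactly the obstacle you flag, and the paper avoids it by reversing that order.

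The paper does \emph{not} apply Plancherel first and then try to separate $\tilde M$ and $F_z$ in Fourier space. Instead it conditions on the \emph{small} primes $p \leq z$ (not the large ones) and applies Cauchy--Schwarz to the conditional expectation over large primes, in physical space:
\[
\E \Bigl| \sum_m f(m) \sum_{n \leq x/m} f(n) \Bigr|
\leq \E_{\leq z} \Bigl( \E_{>z} \Bigl| \sum_m f(m) \sum_{n \leq x/m} f(n) \Bigr|^2 \Bigr)^{1/2}
= \E_{\leq z} \Bigl( \sum_m \Bigl| \sum_{n \leq x/m} f(n) \Bigr|^2 \Bigr)^{1/2},
\]
where the last step is orthogonality of $(f(m))$. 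The large-prime randomness is now \emph{gone}: $\sum_m$ is a deterministic count, and there is no $\tilde M$ to separate from anything. The short-interval Lemma~\ref{l: smooths in short intervals estimate} is then used not as part of a smoothed Perron but to convert this deterministic $\sum_m$ into an integral, producing a factor $(1-\alpha)^{1/2}$ and the shape $\int |\sum_{n\leq z} f(n)|^2 z^{-1-\alpha}\,dz$. Plancherel is applied to \emph{this} integral, which involves only the $z$-smooth sum, yielding $F_z$ directly; Lemma~\ref{l:Euler product on half line} (via your H\"older-to-$2/3$ trick, which the paper also uses) then gives the chaos saving, and the $(1-\alpha)^{1/2}$ from the sieve cancels the $(1-\alpha)^{-1/2}$ from Lemma~\ref{l:Euler product on half line}.

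Your route --- Plancherel first, then separate $\tilde M F_z$ --- forces you to control a weighted integral $\int |\tilde M|^2 |F_z|^2\,dt$ inside a square root, and as you correctly diagnose, every natural H\"older/discretization attempt either drops to the second moment (losing the chaos saving) or picks up a $(\log y)^{1/2}$. That obstacle is real and is not resolved in your proposal; the fix is simply to do Cauchy--Schwarz over large primes \emph{before} any Dirichlet-series manipulation, so that $\tilde M$ never appears.
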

\begin{xremark}
We reiterate that the constant $c = 2 e^{-2}$ is unimportant, and is just chosen to allow for straightforward application of Lemma~\ref{l:Euler product on half line} later on.
\end{xremark}

\begin{proof}[Proof of Theorem~\ref{t:small u upper bound}, assuming Proposition~\ref{p:small u upper bound}]
For compactness, in this section, we will write $\alpha$ in place of $\alpha(x,y)$. We begin by noting that, in the case where $u = \frac{\log x}{\log y} \leq v$ (i.e. when $x^{1/v} \leq y \leq x$), Theorem~\ref{t:small u upper bound} follows identically\footnote{For example, one can follow the proof of Proposition 1 in~\cite[Section~2.4]{HarperLow}, just removing a finite number of $k$ in the initial splitting of the sum. Since this will only decrease the overall size, the same upper bound follows.} to the work of Harper~\cite{HarperLow}. Now suppose that $y \leq x^{1/v}$. By the triangle inequality, we have
\begin{equation}\label{equ:ub triangle ineq}
\E \biggl| \sum_{\substack{n \leq x \\ P(n) \leq y}} f(n) \biggr| \leq \E \biggl| \sum_{\substack{n \leq x \\ e^{\frac{c}{1-\alpha}} < P(n) \leq y}} f(n) \biggr| + \E \biggl| \sum_{\substack{n \leq x \\ P(n) \leq e^{\frac{c}{1-\alpha}}}} f(n) \biggr| .
\end{equation}
If $u \geq \log \log \log x$, then the first term can easily be shown to dominate. Specifically, in this range of $u$, we apply Proposition~\ref{p:small u upper bound} to handle the first term and Cauchy--Schwarz inequality to handle the second term (together with a mean square calculation), giving
\[
\E \biggl| \sum_{\substack{n \leq x \\ P(n) \leq y}} f(n) \biggr| \leq \frac{\Psi(x,y)^{1/2}}{\Bigl(\log \frac{1}{1-\alpha(x,y)} \Bigr)^{1/4}} + \Psi(x,e^{\frac{c}{1 - \alpha}} )^{1/2} .
\]
It follows readily from Lemma~\ref{l:saddle point approx} that $e^{\frac{c}{1 - \alpha}} \leq y^{1/2}$, say, and a short calculation using Lemma~\ref{l:smooth count in terms of saddle point} allows one to deduce that
\[
\Psi(x,\sqrt{y}) \ll \Psi(x,y) e^{-u \log u/3} \ll \frac{\Psi(x,y)}{\Bigl(\log \frac{1}{1-\alpha(x,y)} \Bigr)^{1/2}} ,
\]
and the result follows immediately. 

It remains to handle the case where $v \leq u \leq \log \log \log x$. The main idea is to iteratively apply the decomposition ~\eqref{equ:ub triangle ineq} and then use Proposition~\ref{p:small u upper bound} to evaluate the resulting absolute first moment. Define $y_0 = y$, and $y_{i+1} = e^{\frac{c}{1 - \alpha (x, y_i)}}$ for $i \geq 0$. Iterating~\eqref{equ:ub triangle ineq} $\ell$-times, we have
\[
\E \biggl| \sum_{\substack{n \leq x \\ P(n) \leq y}} f(n) \biggr| \leq \sum_{k=0}^{\ell-1} \E \biggl| \sum_{\substack{n \leq x \\ y_{k+1} < P(n) \leq y_k}} f(n) \biggr| + \E \biggl| \sum_{\substack{n \leq x \\ P(n) \leq y_{\ell}}} f(n) \biggr| .
\]
We apply Proposition~\ref{p:small u upper bound} to bound the terms in the first sum and the Cauchy--Schwarz inequality to the latter term, giving
\[
\E \biggl| \sum_{\substack{n \leq x \\ P(n) \leq y}} f(n) \biggr| \leq \sum_{k=0}^{\ell-1} \frac{\Psi(x,y_k)^{1/2}}{\Bigl(\log \frac{1}{1-\alpha( x,y_k )} \Bigr)^{1/4}} + \Psi(x, y_{\ell}) .
\]
It follows from our conditions on $y$ and Lemma~\ref{l:saddle point approx} that
$y_1 = e^{\frac{c}{1 - \alpha (x,y_0)}} \leq \sqrt{y_0} = \sqrt{y}$, and similarly we have
$y_k \leq y^{1/2^k}$. Taking $\ell = \lfloor \log \log \log x \rfloor$, say, we have
\[
\E \biggl| \sum_{\substack{n \leq x \\ P(n) \leq y}} f(n) \biggr| \leq \frac{1}{(\log \log x)^{1/4}} \sum_{k=0}^{\ell-1} \Psi \bigl( x,y^{1/2^k} \bigr)^{1/2} + \Psi(x, y^{1/2^{\ell}}) .
\]
We now make use of the relation between $\Psi(x,y)$ and the Dickman function, $\rho$. It follows from~\cite[Theorem~III.5.8]{TenenbaumAPNT} that $\Psi \bigl( x,y^{1/2^k} \bigr) \asymp x \rho ( 2^k u )$ uniformly for $k$ in our range of summation. Applying the estimate $\frac{\rho(2u)}{\rho(u)} \leq \frac{1}{2}$ that holds for all $u \geq 1$ (since $\rho$ is rapidly decreasing, this ratio is maximised at $u=1$), we deduce that $\Psi \bigl( x,y^{1/2^k} \bigr) \ll \frac{x}{2^k} \rho(u) \ll \frac{x}{2^k} \Psi(x,y)$. Finally, since $\Psi(x, y^{1/2^{\ell}}) \ll \Psi(x, x^{\frac{1}{\log \log x}}) \ll \Psi(x,y) / \log \log x$, say, we deduce that
\[
\E \biggl| \sum_{\substack{n \leq x \\ P(n) \leq y}} f(n) \biggr| \leq \frac{\Psi(x,y)^{1/2}}{(\log \log x)^{1/4}} ,
\]
which agrees with Theorem~\ref{t:small u upper bound} on this range of $u$. This completes the proof.
\end{proof}
We now proceed with the proof of Proposition~\ref{p:small u upper bound}.
\begin{proof}[Proof of Proposition~\ref{p:small u upper bound}]
Recall that $c = 2e^{-2}$. We have
\[
\E \biggl| \sum_{\substack{n \leq x \\ e^{\frac{c}{1-\alpha}} < P(n) \leq y}} f(n) \biggr| = \E \biggl| \sum_{\substack{1 < m \leq x \\ p | m \Rightarrow p \in (e^{\frac{c}{1-\alpha}}, y]}} f(m) \sum_{\substack{n \leq x/m \\ P(n) \leq e^{\frac{c}{1 - \alpha}}}} f(n) \biggr|.
\]
Let ${\E}_{\alpha}$ denote the expectation conditioned on primes $p \leq e^{\frac{c}{1 - \alpha}}$. By the Cauchy--Schwarz inequality, we have
\begin{align*}
\E \biggl| \sum_{\substack{n \leq x \\ e^{\frac{c}{1-\alpha}} < P(n) \leq y}} f(n) \biggr| &= \E \E_{\alpha} \biggl| \sum_{\substack{1 < m \leq x \\ p | m \Rightarrow p \in (e^{\frac{c}{1-\alpha}}, y]}} f(m) \sum_{\substack{n \leq x/m \\ P(n) \leq e^{\frac{c}{1 - \alpha}}}} f(n) \biggr| \\
& \leq \E \biggl( \E_{\alpha} \biggl| \sum_{\substack{1 < m \leq x \\ p | m \Rightarrow p \in (e^{\frac{c}{1-\alpha}}, y]}} f(m) \sum_{\substack{n \leq x/m \\ P(n) \leq e^{\frac{c}{1 - \alpha}}}} f(n) \biggr|^2 \biggr)^{1/2} \\ 
& = \E \biggl( \sum_{\substack{1 < m \leq x \\ p | m \Rightarrow p \in (e^{\frac{c}{1-\alpha}}, y]}} \biggl| \sum_{\substack{n \leq x/m \\ P(n) \leq e^{\frac{c}{1 - \alpha}}}} f(n) \biggr|^2 \biggr)^{1/2} .
\end{align*}
We wish to replace the sum over $m$ by an integral, which will in turn allow us to apply Parseval's identity. First, we introduce a ``dummy integral'' over $t$, writing
\begin{align*}
\E \biggl| \sum_{\substack{n \leq x \\ e^{\frac{c}{1-\alpha}} < P(n) \leq y}} f(n) \biggr| & \leq \E \biggl( \sum_{\substack{1 < m \leq x \\ p | m \Rightarrow p \in (e^{\frac{c}{1-\alpha}}, y]}} \biggl| \sum_{\substack{n \leq x/m \\ P(n) \leq e^{\frac{c}{1 - \alpha}}}} f(n) \biggr|^2 \biggr)^{1/2} \\
& = \E \biggl( \sum_{\substack{1 < m \leq x \\ p | m \Rightarrow p \in (e^{\frac{c}{1-\alpha}}, y]}} \frac{X}{m} \int_{m}^{m(1+\frac{1}{X})} \biggl| \sum_{\substack{n \leq x/m \\ P(n) \leq e^{\frac{c}{1 - \alpha}}}} f(n) \biggr|^2 \, dt \biggr)^{1/2} .
\end{align*}
The first term on the right-hand is
\begin{multline*}
\ll \E \biggl( \sum_{\substack{1 < m \leq x \\ p | m \Rightarrow p \in (e^{\frac{c}{1-\alpha}}, y]}} \frac{X}{m} \int_{m}^{m(1+\frac{1}{X})} \biggl| \sum_{\substack{n \leq x/t \\ P(n) \leq e^{\frac{c}{1 - \alpha}}}} f(n) \biggr|^2 \, dt \biggr)^{1/2} + \\ 
\E \biggl( \sum_{\substack{1 < m \leq x \\ p | m \Rightarrow p \in (e^{\frac{c}{1-\alpha}}, y]}} \frac{X}{m} \int_{m}^{m(1+\frac{1}{X})} \biggl| \sum_{\substack{x/t < n \leq x/m \\ P(n) \leq e^{\frac{c}{1 - \alpha}}}} f(n) \biggr|^2 \, dt \biggr)^{1/2} .
\end{multline*}
We show that the second term is small. By the Cauchy--Schwarz inequality, we can bound it from above by
\[
\biggl( \sum_{\substack{1 < m \leq x \\ p | m \Rightarrow p \in (e^{\frac{c}{1-\alpha}}, y]}} \frac{X}{m} \int_{m}^{m(1+\frac{1}{X})} \sum_{\substack{x/t < n \leq x/m \\ P(n) \leq e^{\frac{c}{1 - \alpha}}}} 1 \, dt \biggr)^{1/2} \ll \biggl( \sum_{\substack{1 < m \leq x \\ p | m \Rightarrow p \in (e^{\frac{c}{1-\alpha}}, y]}} \sum_{\substack{x/m(1 + \frac{1}{X}) < n \leq x/m \\ P(n) \leq e^{\frac{c}{1 - \alpha}}}} 1\biggr)^{1/2}  .
\]
We make the choice $X = \log \log x$. Change the order of summation, and apply Lemma~\ref{l: smooths below x/d vs below x} and Lemma~\ref{l: smooths in short intervals estimate} to get that (Noting carefully that the conditions for Lemma~\ref{l: smooths in short intervals estimate} are satisfied in our application below)
\begin{align*}
& \ll \biggl( \sum_{\substack{n \leq x \\ P(n) \leq e^{\frac{c}{1 - \alpha}}}} \sum_{\substack{x/n(1 + \frac{1}{X}) < m \leq x/n \\ p \mid m \Rightarrow p \in (e^{\frac{c}{1 - \alpha}}, y]}} 1  \biggr)^{1/2} \ll \biggl( (1 - \alpha) \sum_{\substack{n \leq x \\ P(n) \leq e^{\frac{c}{1 - \alpha}}}} \frac{1}{X} \Psi (x/n, y) \biggr)^{1/2} \\
& \ll \biggl( \frac{\Psi (x, y) (1-\alpha)}{X} \sum_{\substack{n \leq x \\ P(n) \leq e^{\frac{c}{1 - \alpha}}}} \frac{1}{n^{\alpha}} \biggr)^{1/2} \ll \frac{\Psi(x,y)^{1/2}}{X^{1/2}} = \sqrt{\frac{\Psi(x,y)}{\log \log x}} .
\end{align*}
At this stage, we have
\begin{align*}
\E \biggl| \sum_{\substack{n \leq x \\ e^{\frac{c}{1-\alpha}} < P(n) \leq y}} f(n) \biggr| &\ll \E \biggl( \sum_{\substack{1 < m \leq x \\ p | m \Rightarrow p \in (e^{\frac{c}{1-\alpha}}, y]}} \frac{X}{m} \int_{m}^{m(1+\frac{1}{X})} \biggl| \sum_{\substack{n \leq x/t \\ P(n) \leq e^{\frac{c}{1 - \alpha}}}} f(n) \biggr|^2 \, dt \biggr)^{1/2} + \sqrt{\frac{\Psi(x,y)}{\log \log x}} \\
& = \E \biggl( \int_{e^{\frac{c}{1-\alpha}}}^{x} \biggl| \sum_{\substack{n \leq x/t \\ P(n) \leq e^{\frac{c}{1 - \alpha}}}} f(n) \biggr|^2 \sum_{\substack{t/(1 + \frac{1}{X})\leq m \leq t \\ p | m \Rightarrow p \in (e^{\frac{c}{1-\alpha}}, y]}} \frac{X}{m}  \, dt \biggr)^{1/2} + \sqrt{\frac{\Psi(x,y)}{\log \log x}}  .
\end{align*}
Applying Lemma~\ref{l: smooths in short intervals estimate}, we have
\[
\sum_{\substack{t/(1 + \frac{1}{X})\leq m \leq t \\ p | m \Rightarrow p \in (e^{\frac{c}{1-\alpha}}, y]}} \frac{X}{m} \ll ( 1- \alpha ) \frac{\Psi (t,y)}{t} ,
\]
uniformly for $t \geq y$. In the remaining range $e^{\frac{c}{1 - \alpha}} \leq t < y$, the estimate follows using a simple sieve argument (see, for example,~\cite[Theorem~3.6]{MV2007}), sieving out all primes $p \leq (e^{\frac{c}{1 - \alpha}})^{1/10}$, say. Making the change of variables $t = x/z$, we have
\begin{multline*}
\E \biggl| \sum_{\substack{n \leq x \\ e^{\frac{c}{1-\alpha}} < P(n) \leq y}} f(n) \biggr| \ll
(1 - \alpha)^{1/2} \E \biggl( \int_{1}^{x/e^{\frac{1}{1 - \alpha}}} \biggl| \sum_{\substack{n \leq z \\ P(n) \leq e^{\frac{c}{1 - \alpha}}}} f(n) \biggr|^2  \Psi (x/z, y) \, \frac{dz}{z} \biggr)^{1/2} \\ + \sqrt{\frac{\Psi(x,y)}{\log \log x}} .
\end{multline*}
Applying Lemma~\ref{l: smooths below x/d vs below x}, we have $\Psi(x/z, y) \ll \Psi(x, y)/z^{\alpha}$, and completing the integral, we obtain
\begin{equation}\label{equ:large y pre planch}
\begin{split}
\E \biggl| \sum_{\substack{n \leq x \\ e^{\frac{c}{1-\alpha}} < P(n) \leq y}} f(n) \biggr| \ll \bigl( \Psi (x,y) (1 - \alpha ) \bigr)^{1/2} \E \biggl( \int_{1}^{\infty} \biggl| \sum_{\substack{n \leq z \\ P(n) \leq e^{\frac{c}{1 - \alpha}}}} f(n) \biggr|^2 \frac{dz}{z^{1 + \alpha}} \biggr)^{1/2} \\ + \sqrt{\frac{\Psi(x,y)}{\log \log x}} .
\end{split}
\end{equation}
The second term is negligible for our upper bound. We next manipulate the first term so that it can be bounded by using Lemma~\ref{l:Euler product on half line}, and for this purpose, we let $z = e^{\frac{c}{1 - \alpha}}$, so that $F_{z} (s) = \prod_{p \leq e^{\frac{c}{1 - \alpha}}} \bigl( 1 - \frac{f(p)}{p^{s}} \bigr)^{-1}$. Applying Parseval's identity, Lemma~\ref{l:ha result}, followed by Hölder's inequality, we obtain
\begin{align*}
\E \biggl( \int_{1}^{\infty} \biggl| \sum_{\substack{n \leq z \\ P(n) \leq e^{\frac{c}{1 - \alpha}}}} f(n) \biggr|^2 \frac{dz}{z^{1 + \alpha}} \biggr)^{1/2} &= \E \biggl( \int_{-\infty}^{\infty} \biggl| \frac{F_{z} (\frac{\alpha}{2} + it)}{\frac{\alpha}{2} + it} \biggr|^2 \, dt \biggr)^{1/2} \\ 
&\leq \biggl( \E \biggl( \int_{-\infty}^{\infty} \biggl| \frac{F_{z} (\frac{\alpha}{2} + it)}{\frac{\alpha}{2} + it} \biggr|^2 \, dt \biggr)^{2/3} \biggr)^{3/4}.
\end{align*}
The above expectation satisfies 
\begin{align*}
\E \biggl( \int_{-\infty}^{\infty} \biggl| \frac{F_{z} (\frac{\alpha}{2} + it)}{\frac{\alpha}{2} + it} \biggr|^2 \, dt \biggr)^{2/3} & \ll \E \biggl( \sum_{n \in \Z} \frac{1}{|n|^{2} + 1} \int_{n - 1/2}^{n + 1/2} \Bigl| F_{z} \Bigl( \frac{\alpha}{2} + it \Bigr) \Bigr|^2 \, dt \biggr)^{2/3}  \\
& \ll \sum_{n \in \Z} \frac{1}{n^{4/3} + 1} \E \biggl( \int_{n - 1/2}^{n + 1/2} \Bigl| F_{z} \Bigl( \frac{\alpha}{2} + it \Bigr) \Bigr|^2 \, dt \biggr)^{2/3} \\
& \ll \E \biggl( \int_{-1/2}^{1/2} \Bigl| F_{z} \Bigl( \frac{\alpha}{2} + it \Bigr) \Bigr|^2 \, dt \biggr)^{2/3}.
\end{align*}
where to obtain the last line, we have used translation invariance in law. Finally, by Lemma~\ref{l:Euler product on half line} with $q = 2/3$, we have
\[
\E \biggl( \int_{-1/2}^{1/2} \Bigl| F_{z} \Bigl( \frac{\alpha}{2} + it \Bigr) \Bigr|^2 \, dt \biggr)^{2/3} \ll \Biggl( \frac{1}{(1 - \alpha) \sqrt{\log \frac{1}{1-\alpha}}} \Biggr)^{2/3},
\]
so overall this gives
\[
\E \biggl( \int_{1}^{\infty} \biggl| \sum_{\substack{n \leq z \\ P(n) \leq e^{\frac{c}{1 - \alpha}}}} f(n) \biggr|^2 \frac{dz}{z^{1 + \alpha}} \biggr)^{1/2} \ll \frac{1}{(1 - \alpha)^{1/2} \bigl( \log \frac{1}{1 - \alpha} \bigr)^{1/4}} .
\]
Inserting this into~\eqref{equ:large y pre planch}, we deduce that
\[
\E \biggl| \sum_{\substack{n \leq x \\ e^{\frac{c}{1-\alpha}} < P(n) \leq y}} f(n) \biggr| \ll \frac{\Psi (x,y)^{1/2}}{\Bigl(\log \frac{1}{1-\alpha(x,y)} \Bigr)^{1/4}},
\]
which completes the proof of Proposition~\ref{p:small u upper bound}.
\end{proof}

\section{Sums over very smooth numbers: Proof of Theorem~\ref{t: small y}}\label{Sec: small y}
In this section, we give a proof of Theorem~\ref{t: small y}. As mentioned in the introduction, to prove parts $(i)$ and $(ii)$, we again apply Perron's formula similarly to in Section~\ref{sec: moderate}, but this time we get a saving for our Euler product integrals from Lemma~\ref{l:exp for small y}. To give the reader some clarity, the first range, $D \leq y \leq \log x / \log \log x$, is chosen to allow for immediate application of this lemma to the whole Euler product $\E |F_y (\alpha(x,y) / 2 + it)|$.

\begin{proof}[Proof of Theorem~\ref{t: small y}]

We first prove $(i)$ and $(ii)$, with $(iii)$ following from a straightforward exponential sum argument. Identically to as is done at the beginning of the proof of Theorem~\ref{Thm: main quantitative} (see the start of Section~\ref{sec: moderate}), we begin by applying Perron's formula~\cite[Corollary~5.3]{MV2007} to obtain
\[
\sum_{\substack{n \leq x \\ P(n) \leq y}} f(n) = \frac{1}{2 \pi i} \int_{1 - i x}^{1 + ix} F_y (s) x^s \frac{ds}{s} + O (\log x),
\]
which actually holds uniformly for $2 \leq y \leq x$ (in fact, our initial manipulations will all hold on this range). As in the previous section, we define $\alpha \coloneqq \alpha(x,y)$, and we now shift the main term to the line $\Re (s) = \alpha/2$. Similarly to in the previous section, we then have
\begin{multline*}
\E \Bigl| \sum_{\substack{n \leq x \\ P(n) \leq y}} f(n) \Bigr| \ll  x^{\alpha/2} \int_{-x}^{x} \E \biggl| \frac{F_y (\alpha / 2 + it)}{\alpha / 2 + it} \biggr| \, dt + \log x \\ 
+ \frac{1}{x} \int_{\alpha/2}^1 \bigl( \E|F_y (\sigma + ix)| + \E|F_y (\sigma - ix)| \bigr) x^{\sigma} \, d \sigma  .
\end{multline*}
Now applying Cauchy--Schwarz followed by the trivial bound~\eqref{equ: trivial EP bound} for the Euler products in the last term, we find that the last integral term above is 
\[
\ll \frac{1}{x} \int_{\alpha/2}^{1} \zeta(2 \sigma, y)^{1/2} x^{\sigma} \, d \sigma = \frac{1}{x}  \int_{\alpha/2}^{1} \bigl( x^{2\sigma} \zeta (2 \sigma, y) \bigr)^{1/2} \, d \sigma \ll 1,
\]
uniformly for $2 \leq y \leq x$. Here we have used the fact that $x^{2\sigma} \zeta (2\sigma, y)$ is maximised on our range of integration when $\sigma = 1$, which can be shown by differentiating, as in the derivation on the saddle point. From this, we obtain the bound
\begin{equation}\label{equ: before sep y ranges}
\E \Bigl| \sum_{\substack{n \leq x \\ P(n) \leq y}} f(n) \Bigr| \ll  x^{\alpha/2} \int_{-x}^{x} \E \biggl| \frac{F_y (\alpha / 2 + it)}{\alpha / 2 + it} \biggr| \, dt + \log x .
\end{equation}
uniformly for $2 \leq y \leq x$. We now separate into the ranges mentioned in the theorem, beginning with part $(i)$, where we assume that $D \leq y \leq \log x / \log \log x$ for some sufficiently large constant $D \geq 5$. By Lemma~\ref{l:saddle point approx uniform}, in this range, we certainly have $\alpha (x,y) \asymp \frac{y}{\log x \log y}$ and in particular this is $o(\frac{1}{\log y})$. Therefore (assuming $x$ is sufficiently large), we can apply Lemma~\ref{l:exp for small y} to obtain the bound
\[
\E \Bigl| \sum_{\substack{n \leq x \\ P(n) \leq y}} f(n) \Bigr| \ll  x^{\alpha/2} \prod_{p \leq y} \biggl( \frac{1}{3} \log \biggl( \frac{1}{1-p^{-\alpha/2}} \biggr) \biggr) \int_{-x}^{x} \frac{dt}{|\alpha / 2 + it|} + \log x . 
\]
Again using the fact that $\alpha (x,y) \asymp \frac{y}{\log x \log y}$, we see immediately that the integral in the first term is $\ll \log x$, so we obtain the bound
\[
\E \Bigl| \sum_{\substack{n \leq x \\ P(n) \leq y}} f(n) \Bigr| \ll  x^{\alpha/2} (\log x) \prod_{p \leq y} \biggl( \frac{1}{3} \log \biggl( \frac{1}{1-p^{-\alpha/2}} \biggr) \biggr) + \log x . 
\]
This is the first result of part $(i)$ of the Theorem. We now wish to compare this to the trivial bound, $\sqrt{\Psi(x,y)}$. Applying the bound~\eqref{equ:rankin to true count comparison}, we find that 
\[
\E \Bigl| \sum_{\substack{n \leq x \\ P(n) \leq y}} f(n) \Bigr| \ll  \sqrt{\Psi(x,y)} (\log x)^{5/4} (\log y)^{1/4} \prod_{p \leq y} \biggl( \frac{1}{3} (1 - p^{-\alpha})^{1/2} \log \biggl( \frac{1}{1-p^{-\alpha/2}} \biggr) \biggr) + \log x . 
\]
We now apply the bound $\sqrt{1 - x} \log \bigl( \frac{1}{1-\sqrt{x}} \bigr) \leq 3$ for $x \in (0,1)$ to remove the contribution from primes $p \leq \sqrt{y}$. An alternative method would have been to apply the Cauchy--Schwarz bound~\eqref{equ: trivial EP bound} to this part of the Euler product in~\eqref{equ: before sep y ranges}. The last display is therefore
\[
\ll  \sqrt{\Psi(x,y)} (\log x)^{5/4} (\log y)^{1/4} \prod_{\sqrt{y} < p \leq y} \biggl( \frac{1}{3} (1 - p^{-\alpha})^{1/2} \log \biggl( \frac{1}{1-p^{-\alpha/2}} \biggr) \biggr) + \log x . 
\]
To complete the proof of part $(i)$, we give fairly straightforward bounds for the terms in the product. First of all, by Taylor expansion and Lemma~\ref{l:saddle point approx uniform}, if $D$ is sufficiently large, we have 
\[
(1-p^{-\alpha})^{1/2} \leq \frac{3}{2} \sqrt{\frac{y \log p}{(\log x)(\log y)}} \leq \frac{3}{2} \sqrt{\frac{y}{\log x}},
\]
which holds uniformly on this range of $y$ when $x$ is sufficiently large. Here we have used the fact that $\log p \leq \log y$. Furthermore, we have
\[
\log \biggl( \frac{1}{1-p^{-\alpha/2}} \biggr) \leq \log \biggl( \frac{3 (\log x) (\log y)}{y \log p} \biggr) \leq 2 \log \biggl( \frac{ \log x}{y} \biggr),
\]
which holds similarly when $x$ is sufficiently large. Here we have used the fact that $\log p \geq (\log y) / 2$. Inserting these results into the product, we have
\[
\E \Bigl| \sum_{\substack{n \leq x \\ P(n) \leq y}} f(n) \Bigr| \ll  \sqrt{\Psi(x,y)} (\log x)^{5/4} (\log y)^{1/4} \biggl( \sqrt{\frac{y}{\log x}} \log \biggl( \frac{\log x}{y} \biggr) \biggr)^{\pi(y) - \pi(\sqrt{y})} + \log x . 
\]
It can be verified by straightforward estimates that this always gives better than square-root cancellation on the entire range $D \leq y \leq \log x / \log \log x$, so long as $D$ is sufficiently large. This completes the proof of part $(i)$ of Theorem~\ref{t: small y}.

We now move on to proving part $(ii)$ of Theorem~\ref{t: small y}, first fixing the large constant $C>0$ appearing there, and assuming that $\log x / \log \log x \leq y \leq C \log x$. We then return to equation~\eqref{equ: before sep y ranges}. Now, for the largest primes in this range, we cannot directly apply Lemma~\ref{l:exp for small y}, seeing as the saddle point is slightly too large relative to $y$. To make the lemma applicable, we handle the large primes trivially, and we will only apply Lemma~\ref{l:exp for small y} to the small primes. Therefore, we break up the Euler product in~\eqref{equ: before sep y ranges}, writing
\[
\E \Bigl| \sum_{\substack{n \leq x \\ P(n) \leq y}} f(n) \Bigr| \ll  x^{\alpha/2} \int_{-x}^{x} \E |F_{y^{\delta}} (\alpha / 2 + it) | \E \prod_{y^{\delta} < p \leq y} \biggl| 1 - \frac{f(p)}{p^{\alpha/2 + it}} \biggr| \frac{1}{|\alpha / 2 + it|} \, dt + \log x .
\]
which holds for any $0 < \delta < 1$. Note that in our range of $y$, Lemma~\ref{l:saddle point approx uniform} implies $\alpha (x,y) \ll_C \frac{1}{\log y}$. Therefore, taking the constant $\delta$ to be sufficiently small in terms of $C$, we can apply Lemma~\ref{l:exp for small y} to the first Euler product over small primes. To handle the contribution from the large primes, we apply the Cauchy--Schwarz inequality (identically to~\eqref{equ: trivial EP bound}) which tells us that 
\[
\E \prod_{y^{\delta} < p \leq y}  \biggl| 1 - \frac{f(p)}{p^{\alpha/2 + it}} \biggr| \leq \prod_{y^{\delta} < p \leq y} \biggl( 1 - \frac{1}{p^{\alpha}} \biggr)^{-1/2} .
\]
Inserting these results, we obtain
\[
\E \Bigl| \sum_{\substack{n \leq x \\ P(n) \leq y}} f(n) \Bigr| \ll  x^{\alpha/2} \prod_{p \leq y^{\delta}} \frac{1}{3} \log \biggl( \frac{1}{1 - p^{- \alpha / 2}} \biggr) \prod_{y^{\delta} < p \leq y} \biggl( 1 - \frac{1}{p^{\alpha}} \biggr)^{-1/2}  \int_{-x}^{x}\frac{1}{|\alpha / 2 + it|} \, dt + \log x .
\]
Similarly to before, the integral is $\ll \log x$. We now compare this to $\sqrt{\Psi(x,y)}$, again using~\eqref{equ:rankin to true count comparison}. Seeing as the contributions from the large primes cancel, we obtain
\[
\E \Bigl| \sum_{\substack{n \leq x \\ P(n) \leq y}} f(n) \Bigr| \ll  \sqrt{\Psi(x,y)} (\log x)^{5/4} (\log y)^{1/4} \prod_{p \leq y^{\delta}} \biggl( \frac{1}{3} (1 - p^{-\alpha})^{1/2} \log \biggl( \frac{1}{1 - p^{- \alpha / 2}} \biggr) \biggr)  + \log x .
\]
Finally, we use that fact that $\sqrt{1 - x} \log \bigl( \frac{1}{1-\sqrt{x}} \bigr) \leq 3/2$ for $x \in (0,1)$. Applying this inequality with $x = p^{-\alpha}$, we obtain
\[
\E \Bigl| \sum_{\substack{n \leq x \\ P(n) \leq y}} f(n) \Bigr| \ll  \sqrt{\Psi(x,y)} (\log x)^{5/4} (\log y)^{1/4} 2^{- \pi (y^{\delta})}  + \log x .
\]
Seeing as $\log x / \log \log x \leq y \leq C \log x$, we deduce the bound
\[
\E \Bigl| \sum_{\substack{n \leq x \\ P(n) \leq y}} f(n) \Bigr| \ll \sqrt{\Psi (x,y)/\exp ((\log x)^{c})}, 
\]
where $c$ is some small constant depending on $C$ (since $\delta$ depended only on $C>0$). This is precisely $(ii)$ of Theorem~\ref{t: small y}.

We finish by proving the final part, $(iii)$, of Theorem~\ref{t: small y}, which follows by a wasteful, yet simple, exponential sum estimate, where we only obtain a saving from a single prime, $f(2)$. Suppose that $y$ is bounded, and let $p_r$ be the $r$-th prime which is the largest prime $\le y$. Then
\[
\sum_{\substack{n \leq x \\ P(n) \leq y}} f(n) = \sum_{\substack{n\le x\\ P(n)\le p_r}} f(n)  =  \sum_{\substack{m\le x \\ p|m \Rightarrow 3\le p \le p_r}} f(m) \sum_{\substack{\ell\le x/m \\p|\ell \Rightarrow p=2 }} f(\ell). 
\]
Letting $\tilde{\E}$ be the expectation conditioned on $f(2)$, and applying the Cauchy-Schwarz inequality, we have
\begin{align*}
\E\Big|\sum_{\substack{n\le x\\ P(n)\le p_r}} f(n)\Big| &\le \E \biggl( \tilde{\E} \Bigl| \sum_{\substack{m\le x \\ p|m \Rightarrow 3\le p \le p_r}} f(m) \sum_{\substack{\ell\le x/m \\p|\ell \Rightarrow p=2 }} f(\ell) \Bigr|^2 \biggr)^{1/2}  \\ 
&= \E \bigg(\sum_{\substack{m\le x \\ p|m \Rightarrow 3\le p \le p_r}} \Bigl| \sum_{\substack{\ell\le x/m \\p|\ell \Rightarrow p=2 }} f(\ell) \Bigr|^{2}\bigg)^{1/2}.
\end{align*}
The inner sum can be written explicitly by using the law of the Steinhaus random variable, hence
\[ 
\E\Big|\sum_{\substack{n\le x\\ P(n)\le p_r}} f(n)\Big| \leq \int_{-1/2}^{1/2}\bigg(\sum_{\substack{m\le x \\ p|m \Rightarrow 3\le p \le p_r}} \Bigl| \sum_{\substack{k\le \log_2 (x/m) }} e(k\theta) \Bigr|^{2}\bigg)^{1/2} d\theta.
\]
For $|\theta| \leq 1/\log x$, we use a trivial bound on the inner sum and find that the integral is 
\[\ll \frac{1}{\log x } \cdot ((\log x)^{r-1} \cdot (\log x)^{2})^{1/2} = (\log x)^{\frac{r-1}{2}},\]
which is acceptable since $\Psi(x, p_r) \asymp_r (\log x)^{r}$ (see, for example,~\cite[Theorem~1]{Ennola}). For $\theta$ in the complementary range $1/\log x \leq |\theta| \leq 1/2$, we can first bound the geometric series over $k$ by $\ll \frac{1}{|\theta|}$ which leads to a bound on the integral
\[\ll (\log x)^{r-1} \int_{\frac{1}{\log x} \leq |\theta| \leq 1/2 } \frac{d\theta}{\theta}  \ll (\log x)^{\frac{r-1}{2}} \log \log x.  \]
Combining these estimates and comparing to the aforementioned bound $\Psi(x, p_r) \asymp_r (\log x)^{r}$, this completes the proof of the theorem.  
\end{proof}

\appendix
\section{Proof of Lemma~\ref{l: smooths in short intervals estimate}}\label{s:appendix}

\begin{proof}[Proof of Lemma~\ref{l: smooths in short intervals estimate}]
As mentioned, this lemma is a natural extension of~\cite[Theorem~3]{Hildebrand}. To begin, we take $y \geq 2$, $1 \leq u \leq \exp \bigl( (\log y)^{3/5 - \varepsilon} \bigr)$, $y^{-1/3} \leq h \leq 1/2$ and let $\frac{1}{\log y} \leq \delta \leq \frac{100}{\log u}$. Note carefully that in our proof,  mimicking Hildebrand~\cite{Hildebrand}, we redefine $h$ to be the relative length of the sum (i.e. here $h$ plays the role of $h/x$ in the statement of Lemma~\ref{l: smooths in short intervals estimate}). These conditions therefore correspond to the case where $e^{(\log \log x)^{5/3 + \varepsilon}} \leq y \leq x$, $\frac{1}{\log y} \leq \delta \leq \frac{100}{\log u}$ and $x/y^{1/3} \leq h \leq x/2$ in the lemma.

We make use of the standard notation \[\Psi_q (x,y) = \# \{ n \leq x : \, p \mid n \Rightarrow (p,q) = 1 \text{ and } p \leq y \},\] 
and in our proof we fix $q = \prod_{p \leq y^\delta} p$. For any $z \geq y$, we have
\[
\int_1^z \frac{\Psi_q (z,y) - \Psi_q (t,y)}{t} \, dt = \sum_{\substack{n \leq z \\ p|n \Rightarrow y^{\delta} < p \leq y}} \log n ,
\]
and upon writing $\log n = \sum_{d \mid n} \Lambda(d)$, it follows that
\[
\Psi_q (z,y) \log z = \sum_{\substack{p^m \leq z \\ y^\delta < p \leq y}} \Psi_q \bigl( z/p^m , y ) \log p + \int_1^z \frac{\Psi_q (t,y)}{t} \, dt .
\]
Applying the formula above twice and taking the difference gives
\begin{multline*}
\Psi_q (y^u, y) - \Psi_q (y^u (1-h), y) = \frac{1}{u \log y} \biggl( \int_{y^u (1-h)}^{y^u} \frac{\Psi_q (t,y)}{t} \, dt - \Psi_q (y^u (1-h),y) \log \biggl( \frac{1}{1-h} \biggr) \\
+ \sum_{\substack{p^m \leq y^u \\ y^{\delta} < p \leq y}} \Bigl( \Psi_q \bigl( y^u/p^m , y \bigr) - \Psi_q \bigl( y^u (1-h)/p^m, y \bigr) \Bigr) \log p \biggr) .
\end{multline*}
Rewriting the term $\Psi_q (y^u (1-h),y) \log ( \frac{1}{1-h} )$ into the integral leads to
\begin{multline}\label{equ:smooths short int}
\Psi_q (y^u, y) - \Psi_q (y^u (1-h), y) = \frac{1}{u \log y} \biggl( \int_{y^u (1-h)}^{y^u} \frac{\Psi_q (t,y) - \Psi_q (y^u (1-h),y)}{t} \, dt \\
+ \sum_{\substack{p^m \leq y^u \\ y^{\delta} < p \leq y}} \Bigl( \Psi_q \bigl( y^u/p^m , y \bigr) - \Psi_q \bigl( y^u (1-h)/p^m, y \bigr) \Bigr) \log p \biggr) .
\end{multline}
We define $\Delta_h (\delta, y, u) \geq 0$ to satisfy
\[
\Psi_q (y^u, y) - \Psi_q (y^u (1-h), y) = \Delta_h (\delta, y, u)  \frac{h y^u \rho(u)}{\delta \log y} ,    
\]
and we set
\[
\Delta_h^* (\delta, y, u) \coloneqq \sup_{1/2 \leq u' \leq u} \Delta_h (\delta, y, u') .
\]
We will show that uniformly for $ e^{(\log \log x)^{5/3 + \varepsilon}} \leq y \leq x, y^{-1/3} \leq h \leq 1/2, \frac{1}{\log y} \leq \delta \leq \frac{100}{\log u}$: 
\begin{equation}\label{equ: final}
    \Delta_h^* (\delta, y, u) \ll 1.
\end{equation}
If this is proved, then one can see that by using $\Psi(x, y)\gg x\rho(u)$ in our range of parameters (\cite{Hensley1985}), then the Lemma is proved. 

Starting from now, we focus on proving \eqref{equ: final}.
We first consider the integral term in~\eqref{equ:smooths short int}. We have
\begin{align*}
\int_{y^u (1-h)}^{y^u} \frac{\Psi_q (t,y) - \Psi_q (y^u (1-h),y)}{t} \, dt & \leq \frac{h y^u \rho(u )\Delta_h (\delta, y, u)}{\delta \log y} \log \biggl( \frac{1}{1-h} \biggr) \\
& \ll \frac{h y^u \rho(u) \Delta_h^* (\delta, y, u)}{\delta \log y} ,
\end{align*}
and the implied constant is absolute. We turn our attention to
\begin{multline*}
\sum_{\substack{y < p^m \leq y^u \\ y^{\delta} < p \leq y}} \Bigl( \Psi_q \bigl( y^u/p^m , y \bigr) - \Psi_q \bigl( y^u (1-h)/p^m, y \bigr) \Bigr) \log p \\
= \frac{1}{\delta \log y} \sum_{\substack{y < p^m \leq y^u \\ y^{\delta} < p \leq y}} \Delta_h \biggl( \delta, y, u - \frac{\log p^m}{\log y} \biggr) \frac{h y^u \log p}{p^m} \rho \biggl( u - \frac{\log p^m}{\log y} \biggr) .
\end{multline*}
We apply~\cite[Lemma~3]{Hildebrand}, which states that
\[
\sum_{\substack{y < p^m \leq y^u \\ p \leq y}} \frac{\log p}{p^m} \rho \biggl( u - \frac{\log p^m}{\log u} \biggr) \ll \rho (u) ,
\]
uniformly for $y \geq 2$ and $u \leq y^{1/4}$, so we deduce that
\[
\sum_{\substack{y < p^m \leq y^u \\ y^{\delta} < p \leq y}} \Bigl( \Psi_q \bigl( y^u/p^m , y \bigr) - \Psi_q \bigl( y^u (1-h)/p^m, y \bigr) \Bigr) \log p  \ll \frac{h y^u \rho (u)  \Delta_h^* (\delta, y, u) }{\delta \log y} .
\]
Inserting our estimates into~\eqref{equ:smooths short int} and rewriting the left hand side, we deduce that $\Delta_h (\delta,y,u) =$
\[
\frac{1}{\rho(u) u \log y} \sum_{\substack{p^m \leq y \\ y^{\delta} < p \leq y}} \Delta_h \biggl( \delta, y, u - \frac{\log p^m}{\log y} \biggr) \rho \biggl( u - \frac{\log p^m}{\log y} \biggr) \frac{\log p}{p^m} + O \biggl( \frac{\Delta_h^* (\delta, y, u)}{u \log y} \biggr) ,
\]
uniformly for $y \geq 2$ and $u \leq y^{1/4}$. We deduce that $\Delta_h (\delta,y,u)$ is smaller than or equal to
\[
\frac{1}{u \rho(u) \log y} \sum_{\substack{p^m \leq y}} \Delta_h \biggl( \delta, y, u - \frac{\log p^m}{\log y} \biggr) \rho \biggl( u - \frac{\log p^m}{\log y} \biggr) \frac{\log p}{p^m} +  O \biggl( \frac{\Delta_h^* (\delta, y, u)}{u \log y} \biggr).
\]
Applying~\cite[Lemma~4]{Hildebrand}, which states that for every fixed $\varepsilon>0$ and uniformly for $y \geq 2, u \geq 1$ and $0 \leq \theta \leq 1$ the following holds
\begin{multline*}
\sum_{p^m \leq y^{\theta}} \rho\left(u-\frac{\log p^m}{\log y}\right) \frac{\log p}{p^m} =(\log y) \int_{u-\theta}^u \rho(t) \, d t \\
+ O_{\varepsilon}\Bigl( \rho(u) \bigl( 1+u \log ^2(u+1) \exp (-(\log y)^{3 / 5-\varepsilon} ) \bigr) \Bigr)  ,
\end{multline*}
to find that, for $u \leq \exp \bigl( (\log y)^{3/5 - \varepsilon} \bigr)$, we have
\begin{multline*}
\Delta_h (\delta,y,u) \leq \frac{1}{u \rho(u)} \biggl[ \Delta_h^* \bigl( \delta, y, u \bigr)  \int_{u - 1/2}^{u} \rho (t) \, dt 
+ \Delta_h^* \bigl( \delta, y, u - 1/2 \bigr) \int_{u-1}^{u-1/2} \rho (t) \, dt \biggr] \\ +  O \biggl( \frac{\Delta_h^* (\delta, y, u)}{u \log y} \biggr) . 
\end{multline*}
Seeing as $\int_{u-1}^u \rho (t) \, dt = u \rho(u)$ (see~\cite[Lemma~1]{Hildebrand}), we have
\begin{align*}
1 & = \frac{1}{u \rho (u)} \int_{u-1/2}^{u} \rho(t) \, dt + \frac{1}{u \rho (u)} \int_{u-1}^{u-1/2} \rho(t) \, dt \\
& \eqqcolon \alpha (u) + \bigl( 1 - \alpha (u) \bigr).
\end{align*}
Hence
\[
\Delta_h (\delta,y,u) \leq \biggl( \Delta_h^* \bigl( \delta, y, u \bigr) \alpha (u) +  \Delta_h^* \bigl( \delta, y, u - 1/2 \bigr) \bigl( 1 - \alpha (u) \bigr) \biggr) + O \biggl( \frac{\Delta_h^* (\delta, y, u)}{u \log y} \biggr) . 
\]
As is noted in the proof of Theorem 1 in~\cite{Hildebrand}, we have $\alpha (u) \leq 1/2$ (this follows from~\cite[Lemma~1]{Hildebrand}). We deduce that the quantity
\begin{multline*}
\frac{1}{2}\left(\Delta^*(\delta, y, u)+\Delta^*\left(\delta, y, u-\frac{1}{2}\right)\right)-\left(\alpha(u) \Delta^*(\delta, y, u)+(1-\alpha(u)) \Delta^*\left(\delta, y, u-\frac{1}{2}\right)\right) \\
=\left(\frac{1}{2}-\alpha(u)\right)\left(\Delta^*(\delta, y, u)-\Delta^*\left(\delta, y, u-\frac{1}{2}\right)\right)
\end{multline*}
is nonnegative. Therefore,
\[
\Delta_h (\delta,y,u) \leq \frac{1}{2} \Bigl( \Delta_h^* \bigl( \delta, y, u \bigr) + \Delta_h^* \bigl( \delta, y, u - 1/2 \bigr) \Bigr) +  O \biggl( \frac{\Delta_h^* (\delta, y, u)}{u \log y} \biggr).
\]
By taking the sup of $\Delta_h (\delta,y,u')$ over $1/2 u' u$ and using the above bound, we find that there exists some constant $C > 0$,
\[
\Delta_h^* \bigl( \delta, y, u \bigr) \leq \biggl( 1 + C\frac{1}{u \log y} \biggr) \Delta_h^* \bigl( \delta, y, u - 1/2 \bigr) .
\]
Iterating this inequality, we deduce that
\begin{equation}\label{equ: iterated Delta bound}
\Delta_h^* \bigl( \delta, y, u \bigr) \leq e^{O(\frac{\log u}{\log y})} \Delta_h^* \bigl( \delta, y, 1 \bigr) .
\end{equation}
Recall that for $c \in [1/2, 1]$, 
\[
\Delta_h (\delta, y, c) = \frac{\delta \log y}{h y^c \rho(c)} \Bigl( \Psi_q (y^c , y) - \Psi_q (y^c (1-h), y) \Bigr) .
\]
Finally, seeing as
\[
\Psi_q (y^c, y) - \Psi_q (y^c (1-h), y) = \sum_{\substack{y^c (1-h) < n \leq y^c \\ p \mid n \Rightarrow p \in (y^\delta, y]}} 1 \leq \sum_{\substack{y^c (1-h) < n \leq y^c \\ p \mid n \Rightarrow p > y^{\delta}}} 1 , 
\]
and it follows from a simple sieve argument (for example, by~\cite[Theorem~3.6]{MV2007}) that
\[
\sum_{\substack{y^c (1-h) < n \leq y^c \\ p \mid n \Rightarrow p > y^{\delta}}} 1 \ll \frac{h y^c}{\delta \log y},
\]
uniformly for $c \in [1/2,1]$, $h \geq y^{-1/3}$, and $\frac{1}{\log y} \leq \delta \leq \frac{1}{10}$. We therefore have
\[
\Delta_h (\delta, y, c) \ll 1,
\]
uniformly for $c \in [1/2,1]$. Combining this with
~\eqref{equ: iterated Delta bound}, we deduce that 
\[\Delta_h^* \bigl( \delta, y, u \bigr) \leq e^{O(\frac{\log u}{\log y})} \ll  1,\]
which follows from the fact that $y \geq e^{(\log \log x)^{5/3 + \varepsilon}}$. This completes the proof of \eqref{equ: final} and the lemma is proved. 

\end{proof}

\subsubsection*{Rights Retention}
For the purpose of open access, the authors have applied a Creative Commons Attribution (CC-BY) licence to any Author Accepted Manuscript version arising from this submission.

\bibliographystyle{plain}
	\bibliography{main}{}
\end{document}